\pgfplotsset{compat=newest}
\newtheorem*{claim*}{Claim}
\newtheorem{thm}{Theorem}[section]
\newtheorem{lemma}[thm]{Lemma}
\newtheorem{prop}[thm]{Proposition}
\newtheorem{obs}[thm]{Observation}
\theoremstyle{plain}
\theoremstyle{definition}
\newtheorem{eg}[thm]{Example}
\newtheorem{Def}[thm]{Definition}
\newcommand{\FF}{\mathcal{F}}
\newcommand{\LL}{\mathcal{L}}
\newcommand{\PP}{\mathcal{P}}
\newcommand{\HH}{\mathcal{H}}
\newcommand{\R}{\mathbb{R}}
\newcommand{\calP}{\mathcal{P}}
\newcommand{\calF}{\mathcal{F}}
\newcommand{\gj}{\mathcal{G}}
\newcommand{\arc}[1]{\gamma_S(#1)}
\title{On a conjecture of Karasev}
\author{Seunghun~Lee}
\address{S.~Lee \\ Department of Mathematical Sciences \\ KAIST \\ Daejeon \\ South Korea} \email{prosolver@kaist.ac.kr}
\author{Kangmin~Yoo}
\address{K.~Yoo \\ Department of Mathematical Sciences \\ KAIST \\ Daejeon \\ South Korea} \email{dearmymind@kaist.ac.kr}
\thanks{This research was supported by Basic Science Research Program through the National Research Foundation of Korea(NRF) funded by the Ministry of Education (NRF-2016R1D1A1B03930998).}
\begin{document}
	\begin{abstract}
		Karasev conjectured that for any set of $3k$ lines in general position in the plane, which is partitioned into $3$ color classes of equal size $k$, the set can be partitioned into $k$ colorful 3-subsets such that all the triangles formed by the subsets have a point in common. Although the general conjecture is false, we show that Karasev's conjecture is true for lines in convex position. We also discuss possible generalizations of this result.
	\end{abstract}
	
	\maketitle

	\section{Introduction}\label{section-intro}
	One of the classical results in discrete geometry is Tverberg's Theorem \cite{history0} which asserts that any set of $(d+1)(r-1)+1$ points in $\R^d$ can be partitioned into $r$ disjoint subsets whose convex hulls have a point in common. Tverberg's theorem has many variants as we discuss in this section.
	
	\subsection{Dual Tverberg theorems}
	We say that a collection of hyperplanes in $\R^d$ is \textit{in general position} if the intersection of any $d$ of the hyperplanes is a single point, but the intersection of any $d+1$ of them is empty. Note that, for any $d+1$ hyperplanes in general position in $\R^d$, there is a unique bounded $d$-dimensional simplex formed by the hyperplanes. There are dual versions of Tverberg's theorem, which consider a set of hyperplanes in general position and bounded simplices formed by its subsets of size $d+1$. For a set of lines, Roudneff \cite{roudneff} proved the following result.
	
	\begin{thm}
		For a set of $3r$ lines in general position in the plane, the set can be partitioned into $r$ subsets of size 3 so that all the triangles formed by the subsets have a point in common.
	\end{thm}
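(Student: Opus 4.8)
The plan is to decouple the problem into \emph{locating a single common point} $O$ and then \emph{partitioning the lines around it}. Throughout I take $O$ in general position with respect to the $3r$ lines (off every line, with the $3r$ normal directions defined below pairwise distinct and no two antipodal); these are generic conditions.

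\emph{Reformulation via normals.} Fix $O$, and for each line $\ell_i$ let $H_i$ be the open half-plane bounded by $\ell_i$ that contains $O$, with outward unit normal $n_i\in S^1$ (pointing away from $O$). For three lines $\ell_i,\ell_j,\ell_k$ in general position the bounded triangle they form is the unique bounded cell of their arrangement, and $O$ lies in it exactly when $H_i\cap H_j\cap H_k$ is bounded. Since $O\in H_i\cap H_j\cap H_k$, this intersection is bounded iff the normals $n_i,n_j,n_k$ positively span $\R^2$, i.e.\ no closed half-plane through the origin contains all three; equivalently, the three angles on the circle have all cyclic gaps strictly below $\pi$. So the theorem reduces to: choose $O$ so that the $3r$ directions $n_1,\dots,n_{3r}\in S^1$ split into $r$ positively spanning triples.

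\emph{The combinatorial partition.} Such a splitting exists once every open half-circle of $S^1$ contains at least $r$ of the $n_i$. Sort the directions cyclically as $m_1,\dots,m_{3r}$ and take the diagonal triples $T_t=\{m_t,m_{t+r},m_{t+2r}\}$, $t=1,\dots,r$ (indices mod $3r$); each circular gap between consecutive members of a $T_t$ contains exactly $r-1$ of the remaining directions. If some $T_t$ did not positively span, its three members would lie in a closed half-circle, so the complementary closed arc of length at most $\pi$ would contain the third member together with the $2(r-1)$ directions in the two small gaps and the two gap endpoints, i.e.\ at least $2r+1$ directions; a generic rotation then produces an open half-circle with at least $2r+1$ directions, whose opposite open half-circle holds at most $r-1$, contradicting the hypothesis.

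\emph{Existence of the center (the crux).} It remains to find $O$ for which every open half-circle of normals contains at least $r$ of the $n_i$. Since $\langle u,n_i\rangle>0$ precisely when the ray from $O$ in direction $u$ meets $\ell_i$, this says exactly that \emph{every ray from $O$ meets at least $r=\tfrac13(3r)$ of the $3r$ lines}. I expect this ``dual centerpoint'' statement to be the main obstacle. The plan is to derive it from the planar centerpoint theorem via point--line duality: the $3r$ lines dualize to $3r$ points, and the ray condition becomes the requirement that the dual line $O^{\vee}$ split the dual points so that, for every point on $O^{\vee}$, each of the two pairs of opposite quadrants cut out by $O^{\vee}$ and the vertical line through that point contains at least a third of the dual points. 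The fraction $1/3$ matches the centerpoint bound exactly, which makes the approach both plausible and tight; the real difficulty is that the object sought is a \emph{line} $O^{\vee}$, not a point, so a bare appeal to the centerpoint theorem must be supplemented (for instance by fixing the direction of $O^{\vee}$ and running an intermediate-value argument for its position, or by a degree/Borsuk--Ulam argument applied to the signed crossing-count as $u$ traverses $S^1$). Having produced such an $O$, a small perturbation keeps it generic and keeps the strict inequality, and the combinatorial partition above then yields $r$ triangles all containing $O$.
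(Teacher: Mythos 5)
Your reduction is clean and correct as far as it goes: for a base point $O$ off all the lines, the bounded triangle of three lines contains $O$ exactly when the three outward normals positively span $\R^2$ (the recession-cone computation you indicate), and your diagonal-triple argument correctly shows that once every open semicircle contains at least $r$ of the $3r$ normal directions, the cyclic partition into triples $\{m_t,m_{t+r},m_{t+2r}\}$ consists entirely of positively spanning triples. (For the record, the paper states this theorem as a cited result of Roudneff and gives no proof, so there is nothing internal to compare against.)

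The problem is that the entire content of the theorem now sits in the step you yourself label the crux and do not prove: the existence of a point $O$ such that every ray from $O$ meets at least $r$ of the $3r$ lines. This is the planar dual centerpoint theorem; it is true, but it is a genuine theorem in its own right --- it is essentially the subject of Karasev's cited paper ``Dual central point theorems,'' and it is \emph{not} a formal consequence of the ordinary centerpoint theorem under point--line duality. Under duality the object you seek is a line $O^{\vee}$, and the condition you need is quantified over every point of that line (equivalently, over every direction $u$ of the ray), which is strictly stronger than what the centerpoint theorem delivers for a single point. Your parenthetical remedies are names of strategies rather than arguments: a one-parameter intermediate-value argument (fixing the direction of $O^{\vee}$ and sliding it) can enforce one scalar condition, not the continuum of conditions indexed by $u$; a degree or Borsuk--Ulam argument on the crossing count is plausibly the right tool (it is close to how Karasev proves the higher-dimensional versions), but you would need to exhibit the equivariant map and verify it has no zero-avoiding section, none of which is sketched. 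As it stands the proposal is a correct and tight reduction of Roudneff's theorem to the dual centerpoint theorem, not a proof of it.
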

	
	And Karasev \cite{dualcentral} proved a generalization of this result.
	
	\begin{thm}
		\label{dual-tve}
		Let $r$ be a prime power. For a set of $(d+1)r$ hyperplanes in general position in $\R^d$, the set can be partitioned into $r$ subsets of size $d+1$ so that all the $d$-dimensional simplices formed by the subsets have a point in common.
	\end{thm}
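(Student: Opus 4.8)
The plan is to recast the existence of a common point as a Tverberg-type partition problem with a \emph{prescribed} common point, and then to resolve it by the standard configuration-space/test-map scheme together with a prime-power Borsuk--Ulam theorem. First I would normalize the data: write each hyperplane as $H_i=\{x:\inprod{a_i}{x}=b_i\}$ with unit normal $\bar a_i=a_i/|a_i|$, and for a point $z$ off the arrangement set $\sigma_i(z)=\mathrm{sign}(\inprod{a_i}{z}-b_i)\in\{\pm1\}$. Computing outer normals at $z$, one checks that $z$ lies in the bounded simplex cut out by a block $B$ of $d+1$ hyperplanes in general position exactly when the vectors $\{\sigma_i(z)\,\bar a_i : i\in B\}$ positively span $\R^d$, equivalently when the origin lies in the interior of their convex hull. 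Thus the theorem becomes the statement that there is a point $z$ and a partition of the $(d+1)r$ indices into blocks $B_1,\dots,B_r$ of size $d+1$ with $0\in\mathrm{int}\,\conv\{\sigma_i(z)\bar a_i : i\in B_k\}$ for every $k$; that is, the $z$-dependent configuration on $S^{d-1}$ must admit $r$ disjoint positively spanning blocks.

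Next I would build a configuration space carrying both the partition and the point. For $r=p^m$ a prime power, an ordered coloring $c:[n]\to[r]$ (each color used $d+1$ times) is modeled by a point of the $r$-fold deleted join of the $n$-element vertex set, a highly connected complex with a free $(\mathbb{Z}/p)^m$-action permuting the colors; I would compactify the space of candidate points to a sphere $S^d$ by adjoining the directions at infinity. On the product I would define an equivariant test map: for each color $k$ record the vector from the origin to the nearest point of $\conv\{\sigma_i(z)\bar a_i : c(i)=k\}$, which vanishes exactly when the origin is contained, and then assemble these over the $r$ colors and project to the sum-zero subspace to obtain a map $F$ into a free representation sphere of $(\mathbb{Z}/p)^m$ that is equivariant under the color-permuting action. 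A common zero of all $r$ coordinates is precisely a valid tuple $(z,B_1,\dots,B_r)$.

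To force a zero I would compare the connectivity of the configuration space against the dimension of the target representation sphere. The deleted join supplies the connectivity, the compactified point sphere contributes the extra dimensions matching the $r$ blocks' codimension budget, and the bookkeeping is arranged so that an $F$ avoiding $0$ would restrict to an equivariant map from a sufficiently connected free $(\mathbb{Z}/p)^m$-space to a representation sphere of too small dimension. Volovikov's generalization of the Borsuk--Ulam theorem for elementary abelian $p$-groups forbids this, yielding the zero. This is exactly where the prime-power hypothesis on $r$ is used, since only for $r=p^m$ is a free $(\mathbb{Z}/p)^m$-action and the attendant cohomological index bound available.

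The main obstacle is the construction and analysis of the equivariant test map $F$: one must encode the \emph{strict} (interior) containment uniformly in $z$ without creating spurious zeros at the compactifying points at infinity, keep $F$ genuinely equivariant across the color-permuting action, and verify that its target is a free representation sphere of precisely the dimension for which the connectivity/Volovikov count closes. General position enters twice here — it guarantees that each block of $d+1$ hyperplanes bounds a unique nondegenerate simplex, so that $\sigma_i(z)$ and the positive-spanning condition are well defined, and it provides the transversality needed to upgrade a topological zero of $F$ to an honest \emph{interior} common point.
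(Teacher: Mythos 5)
A point of orientation first: the paper does not prove Theorem \ref{dual-tve} at all --- it is quoted as background from Karasev \cite{dualcentral} --- so there is no in-paper proof to compare against. Your outline does follow the general strategy of the known proof: reduce membership in the bounded simplex to a positive-spanning condition on signed normals, build a configuration-space/test-map scheme, and close the argument with a $(\mathbb{Z}/p)^m$-equivariant Borsuk--Ulam theorem of Volovikov type, which is indeed where the prime-power hypothesis enters. Your opening reduction is correct: the cell of the subarrangement $\{H_i\}_{i\in B}$ containing $z$ is bounded precisely when its recession cone is trivial, i.e.\ when $\{\sigma_i(z)\bar a_i\}$ positively spans $\R^d$, and for $d+1$ hyperplanes in general position the unique bounded cell is the simplex.

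However, as written the proposal has a genuine gap at its center: the test map you describe is not continuous. The signs $\sigma_i(z)$ jump as $z$ crosses $H_i$, so the vectors $\sigma_i(z)\bar a_i$ --- and hence the nearest-point-to-the-origin map on $\conv\{\sigma_i(z)\bar a_i : c(i)=k\}$ --- are discontinuous exactly along the arrangement, which is where the interesting zeros must occur. The obvious repair, replacing $\sigma_i(z)\bar a_i$ by $(\inprod{a_i}{z}-b_i)\bar a_i$ to restore continuity, creates the opposite defect: on $H_i$ the $i$th vector degenerates to the origin, which places $0$ in the convex hull for free and manufactures spurious zeros. Resolving this tension is the actual technical content of the theorem, and your sketch acknowledges it (``without creating spurious zeros,'' ``encode the strict containment uniformly in $z$'') but does not indicate how it is done. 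The dimension bookkeeping is likewise asserted rather than carried out: you do not specify which deleted join is used, what the target representation is beyond ``the sum-zero subspace,'' or why the connectivity-versus-dimension inequality closes; note also that for $m>1$ the relevant representation sphere is only fixed-point free, not free, for $(\mathbb{Z}/p)^m$, which is precisely why Volovikov's theorem rather than a free-index argument is required and affects the count. (One point that is less of a worry than you suggest: the constraint that each block have exactly $d+1$ hyperplanes need not be built into the complex, since $0\in\mathrm{int}\,\conv$ of $k$ vectors in $\R^d$ forces $k\ge d+1$, and with $(d+1)r$ hyperplanes in total equality follows for every block at any genuine zero.) In sum, the strategy is the right one, but the two hard steps --- a continuous equivariant test map and the verified index computation --- are exactly the ones deferred, so this is a plan rather than a proof.
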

	
	\subsection{Colored Tverberg theorems}
	For a set $P$ which is partitioned into color classes, a subset $Q$ of $P$ is called \textit{colorful} if it contains exactly one from each color class. A research direction which has recieved a significant amount of attention in recent years is to establish a colored version of Tverberg's theorem.
	
	\begin{thm}[The colored Tverberg theorem]
		\label{col-tve}
		For any $d$ and $r$ with $d \ge 1$ and $r \ge 2$, there exists a positive integer $t$ which satisfies the following: For a set of $(d+1)t$ points in $\R^d$ which is partitioned into $d+1$ color classes of size $t$, there exist $r$ disjoint colorful subsets whose convex hulls have a point in common.
	\end{thm}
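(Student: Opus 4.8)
The plan is to prove this via the configuration-space/test-map paradigm and equivariant topology, following the route pioneered by Živaljević and Vrećica; the convex-position restriction that drives the rest of the paper is of no help here, so the argument is purely topological. First I would reduce to the case where $r = p$ is prime. The reduction is by monotonicity: if the statement holds for some prime $p \ge r$ with a value $t = t(p,d)$, then given $(d+1)t$ points in $\R^d$ split into $d+1$ color classes of size $t$, the prime case produces $p$ disjoint colorful subsets with a common point, and discarding all but $r$ of them yields $r$ disjoint colorful subsets whose convex hulls still contain that common point. Hence it suffices to treat $r = p$ prime, and I would take $t = 2p - 1$.

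For the prime case I would set up the configuration space as the $(d+1)$-fold join $K = \Delta \ast \cdots \ast \Delta$ of the chessboard complex $\Delta = \Delta_{2p-1,\,p}$, whose vertices are pairs (point, part) and whose faces are the non-attacking rook placements, i.e. assignments sending distinct points to distinct parts. The $i$-th join factor records, for color $i$, an injective partial assignment of its $2p-1$ points to the $p$ parts, so that a full-dimensional face of $K$ encodes $p$ pairwise disjoint colorful $(d+1)$-subsets. The cyclic group $\mathbb{Z}/p$ acts on $K$ by permuting the $p$ parts diagonally across all colors. I would then construct a $\mathbb{Z}/p$-equivariant test map $f \colon K \to W^{\oplus(d+1)}$, where $W = \{x \in \R^p : \sum_j x_j = 0\}$ is the reduced regular representation, arranged so that $f(\xi) = 0$ forces the $p$ parts to produce a common point lying in the convex hulls of $p$ disjoint colorful subsets. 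Concretely, from the join and barycentric coordinates of $\xi$ one reads off, for each part, a weighted barycenter of the colored points assigned to it together with its total mass, and $f$ measures the deviation of these (point, mass) data from the diagonal.

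The topological core is a Dold-type obstruction. By the Björner--Lovász--Vrećica--Živaljević bound, $\Delta_{2p-1,p}$ is $(p-2)$-connected, so the join connectivity estimate gives that $K$ is $((d+1)p - 2)$-connected, which strictly exceeds the dimension $(d+1)(p-1) - 1$ of the sphere $S(W^{\oplus(d+1)})$ whenever $d \ge 1$. Since the $\mathbb{Z}/p$-action is free on this sphere and, as $p$ is prime, also free on $K$ (a nontrivial $p$-cycle permuting parts cannot fix a rook placement without forcing two rooks into the same row), Dold's theorem rules out any $\mathbb{Z}/p$-equivariant map from $K$ to that sphere. Consequently $f$ must vanish somewhere, and the vanishing point delivers the desired intersecting family.

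The main obstacle I anticipate is the bookkeeping in the test map: one must define $f$ so that a genuine zero corresponds not merely to $p$ coincident weighted barycenters, but to $p$ subsets that are \emph{simultaneously} colorful, pairwise disjoint, and each contributing a point to the common intersection, and one must check that the dimension count with $t = 2p - 1$ forces each color to be fully matched to all $p$ parts rather than leaving some parts colorless. Verifying freeness of the action on $K$ and invoking the precise connectivity of chessboard complexes are the two supporting technical ingredients, but neither is as delicate as making the zero set of $f$ say exactly what we want.
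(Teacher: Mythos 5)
The paper does not prove this theorem: it is quoted as background and attributed to \v{Z}ivaljevi\'{c} and Vre\'{c}ica (with the bound $t\le 2r-1$ for $r$ prime), so there is no internal proof to compare against. Your outline is essentially a reconstruction of that cited argument, and the key numerology is right: $\Delta_{2p-1,p}$ is $(p-2)$-connected, the $(d+1)$-fold join is $((d+1)p-2)$-connected, the sphere $S(W^{\oplus(d+1)})$ has dimension $(d+1)(p-1)-1\le (d+1)p-2$, the $\mathbb{Z}/p$-actions are free, and Dold's theorem forces a zero of the test map; the reduction to a prime $p\ge r$ by discarding surplus parts is also fine since the theorem only asks for the existence of some $t$. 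The one step you should phrase more carefully is the last one: a zero of the test map does \emph{not} force each color to be fully matched to all $p$ parts. It only forces the $p$ parts to have equal (hence positive) total mass and coincident weighted barycenters, so what you obtain directly is $p$ pairwise disjoint nonempty ``rainbow'' sets, each with at most one point per color, whose convex hulls share a point. To reach the paper's notion of colorful (exactly one point from each class), you complete each deficient part afterwards with unused points of the missing colors --- possible because each class has $2p-1\ge p$ points and at most $p$ are already in use, and adding points only enlarges convex hulls. With that adjustment your sketch is a correct proof outline of the stated theorem.
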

	
	This theorem was originally a conjecture of B\'{a}r\'{a}ny, F\"{u}redi and Lov\'{a}sz \cite{history1}. Let $t(d,r)$ be the smallest integer for which the conclusion of Theorem \ref{col-tve} holds. A particular case $t(2,3) \le 7$ was given in the same paper by B\'{a}r\'{a}ny, F\"{u}redi and Lov\'{a}sz \cite{history1}. Later, B\'{a}r\'{a}ny and Larman \cite{history2} proved $t(2,r) = r$ and conjectured $t(d,r) = r$ for every dimension $d$. Using topological methods, the general case was first proved by \v{Z}ivaljevi\'{c} and Vre\'{c}ica \cite{history3}, who also showed that $t(d,r) \le 2r-1$ whenever $r$ is a prime number. The same bound was extended to all $r$ which are prime powers \cite{user}. Later Blagojevi\'{c}, Matschke and Ziegler \cite{history4} obtained the optimal bound $t(d,r) = r$ whenever $r+1$ is a prime number.
	
	\subsection{The dual colored Tverberg theorem and our main result}
	Karasev \cite{dualcentral} also proved a dual version of the colored Tverberg theorem.
	
	\begin{thm}[The dual colored Tverberg theorem]
		\label{dual-col-tve}
		For any $d$ and $r$ with $d \ge 1$ and $r \ge 2$, there exists a positive integer $k$ which satisfies the following: For a set of $(d+1)k$ hyperplanes in $\R^d$ in general position, which is partitioned into $d+1$ color classes of size $k$, there exist $r$ disjoint colorful subsets  such that all the $d$-dimensional simplices formed by the subsets have a point in common.
	\end{thm}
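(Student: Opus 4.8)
The plan is to recast the dual statement as an equivariant topology problem via point--hyperplane polarity, and then to run a configuration space / test map (CS/TM) scheme that mirrors the proof of the primal colored Tverberg theorem (Theorem~\ref{col-tve}). First I would record the polarity dictionary that converts simplices of hyperplanes into convex hulls of points. Fix a candidate common point $p$, translate so that $p$ is the origin, and write each hyperplane not through $p$ as $h=\{x:\langle a,x\rangle=1\}$, so that $h$ is sent to the dual point $a=a(h,p)$. A direct computation shows that $p$ lies in the bounded simplex cut out by $h_0,\dots,h_d$ if and only if $p$ lies in the interior of $\conv\{a(h_0,p),\dots,a(h_d,p)\}$, because the cell of the arrangement containing $p$ is bounded exactly when the normals $a(h_i,p)$ positively span $\R^d$. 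Thus ``find $p$ and $r$ disjoint colorful $(d+1)$-subsets whose simplices all contain $p$'' becomes ``find $p$ and $r$ disjoint colorful subsets of the $p$-polar point set whose convex hulls all contain $p$.''

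The subtlety --- and the reason this is not an immediate corollary of Theorem~\ref{col-tve} --- is that the dual point set depends on $p$, while $p$ is simultaneously required to be the common Tverberg point. I would resolve this self-reference by homogenizing: lift each $h_i$ to the linear hyperplane in $\R^{d+1}$ with normal $N_i$ depending only on $h_i$, and lift $p$ to the ray through $(p,1)$. The containment condition then becomes a fixed, $p$-independent positive-spanning condition on the $N_i$ relative to the lift of $p$, which is the form amenable to a topological test map.

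Next I would set up the CS/TM scheme, taking $r$ prime first. The configuration space encodes the choice of $r$ disjoint colorful $(d+1)$-subsets: for each of the $d+1$ color classes (each of size $k$) one assigns its elements among the $r$ groups together with an ``unused'' state, so the relevant space is built as a deleted join of the color classes, expressible through chessboard complexes $\Delta_{k,r}$, and carries a free $\mathbb{Z}/r$-action cyclically permuting the $r$ groups. Using the polarity dictionary I would define a $\mathbb{Z}/r$-equivariant test map from this space to the representation $W_r^{\oplus d}$, where $W_r=\{x\in\R^r:\sum x_i=0\}$, arranged so that its zero set corresponds exactly to configurations realizing a common point $p$ inside all $r$ simplices.

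The hard part will be the topological core: showing the equivariant test map must vanish. This needs (i) a connectivity estimate for the colorful configuration space --- the standard bounds on the connectivity of chessboard complexes, which improve as $k$ grows --- and (ii) a Borsuk--Ulam/Dold-type non-existence theorem for $\mathbb{Z}/r$-equivariant maps from a sufficiently connected free $\mathbb{Z}/r$-space to the sphere $S(W_r^{\oplus d})$ of dimension $d(r-1)-1$. Matching the connectivity to this dimension is precisely what forces $k$ to be large, which is why the statement asserts only the existence of some sufficiently large $k$ rather than an optimal value. Finally I would dispose of two routine points: passing from prime $r$ to arbitrary $r$ (by taking a prime $\ge r$ and discarding surplus groups, at the cost of enlarging $k$), and observing that the hypothesis of general position already guarantees that every colorful $(d+1)$-subset spans a genuine bounded simplex, so that the test map is well defined and its vanishing locus indeed yields the desired common point.
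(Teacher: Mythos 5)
The paper does not prove Theorem~\ref{dual-col-tve}: it is quoted from Karasev \cite{dualcentral}, where it is established with the quantitative bound $k(d,r)\le 2r-1$ for prime powers $r$. So there is no in-paper argument to compare yours against, and I can only judge the sketch on its own terms. Your overall plan --- polarize at a candidate point $p$, translate ``$p$ lies in the bounded simplex of $h_0,\dots,h_d$'' into ``the dual points $a(h_i,p)$ positively span $\R^d$'', and then run an equivariant configuration space/test map scheme with chessboard complexes, the representation $W_r^{\oplus d}$, and a Dold-type nonexistence theorem --- is the right family of ideas and is in the spirit of the cited source. The polarity dictionary you state is correct: with $p$ at the origin and $h=\{x:\langle a,x\rangle=1\}$, the cell containing $p$ is bounded exactly when $0\in\mathrm{int}\,\conv\{a(h_0,p),\dots,a(h_d,p)\}$.

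The concrete gap is the paragraph claiming that homogenization makes the condition ``$p$-independent.'' It does not. Writing $h_i=\{x:\langle a_i,x\rangle=b_i\}$, the dual point is $a(h_i,p)=a_i/(b_i-\langle a_i,p\rangle)$; its direction is $\pm a_i$ with the sign determined by which side of $h_i$ the point $p$ lies on, and the positive-spanning condition depends on exactly this sign pattern. Lifting to $\R^{d+1}$ changes nothing: the normals $N_i$ are fixed, but the relevant condition is still a condition \emph{relative to the lift of $p$} and it flips discontinuously as $p$ crosses $h_i$ (equivalently, $a(h_i,p)$ blows up on $h_i$). Consequently the test map you describe is defined and continuous only on the complement of the arrangement, which is disconnected, and a zero of such a map does not hand you the common point; this self-referential coupling of $p$ and the partition is precisely the difficulty that distinguishes the dual statement from Theorem~\ref{col-tve}, and it is where the real work must happen (e.g.\ by building $p$, or a compactified parametrization of candidate points, into the configuration space and making the map continuous across the walls). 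Until you specify a genuinely continuous equivariant map whose zero set encodes both the partition and the point, the Borsuk--Ulam/Dold step has nothing to apply to. The remaining omissions (no explicit $k$, prime power versus prime $r$) are minor by comparison, since the theorem only asserts the existence of some $k$.
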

	
	Similarly with $t(d,r)$, let $k(d,r)$ be the smallest integer for which the conclusion of Theorem \ref{dual-col-tve} holds.
	In \cite{dualcentral}, Karasev proved $k(d,r) \le 2r-1$ when $r$ is a prime power. Especially for the $2$-dimensional case, he conjectured that the equality $k(2,r) = r$ holds, which is an analogue of $t(2,r) = r$ for the colored Tverberg theorem. However, the following example shows that $k(2,r)\ne r$ whenever $r=2t$ for some odd number $t$ (The counterexample showing $k(2,2)\ne 2$ was also discovered independently by Liping~Yuan \cite{yuan}).
	
	\begin{eg}
		Consider the set $\LL$ with 6 lines shown in Figure \ref{fig-ceg}. Note that the set $\LL$ is in general position and each line in $\LL$ is colored by either RED, BLUE, or GREEN. There are 4 ways to partition $\LL$ into 2 colorful subsets, and in every case the subsets form two disjoint triangles. Therefore, the set $\LL$ is a counterexample to the conjecture.
		
		\begin{figure}[ht]
			\begin{center}
				\begin{subfigure}{0.45\textwidth}
					\begin{tikzpicture}[scale=0.7]
					\draw[fill,gray!30] (-1.22,0.03)--(-0.52,1.4)--(2.8,0.03)--(-1.22,0.03);
					\draw[fill,gray!30] (-0.03,-0.52)--(-0.03,-1.73)--(-0.42,-0.73)--(0.03,-0.52);
					
					\draw [red,densely dotted,thick] (-3,0) -- (4,0);
					\draw [red,densely dotted,thick] (0,3.5) -- (0,-3);
					\draw [blue,dashed,thick] (-3,2.5) -- (4,-0.5);
					\draw [blue,dashed,thick] (-2,3) -- (0.5,-3);
					\draw [green] (-2.5,-2.5) -- (0.5,3.5);
					\draw [green] (-3,-2) -- (3,1);
					
					\node [above] at (-3,0) {$l_1$};
					\node [above] at (0,3.5) {$l_4$};
					\node [above] at (-2,3) {$l_5$};
					\node [above] at (-3,2.5) {$l_2$};
					\node [right] at (0.5,3.5) {$l_3$};
					\node [above] at (3,1) {$l_6$};
					\end{tikzpicture}
					\subcaption{$ \{l_1,l_2,l_3\} \cup \{l_4,l_5,l_6\} $}
				\end{subfigure}
				~
				\begin{subfigure}{0.45\textwidth}
					\begin{tikzpicture}[scale=0.7]
					\draw[fill,gray!30] (1.08,0.03)--(2.75,0.03)--(1.82,0.41)--(1.08,0.03);
					\draw[fill,gray!30] (-0.03,2.4)--(-0.03,-1.72)--(-0.95,0.53)--(-0.03,2.39);
					
					\draw [red,densely dotted,thick] (-3,0) -- (4,0);
					\draw [red,densely dotted,thick] (0,3.5) -- (0,-3);
					\draw [blue,dashed,thick] (-3,2.5) -- (4,-0.5);
					\draw [blue,dashed,thick] (-2,3) -- (0.5,-3);
					\draw [green] (-2.5,-2.5) -- (0.5,3.5);
					\draw [green] (-3,-2) -- (3,1);
					
					\node [above] at (-3,0) {$l_1$};
					\node [above] at (0,3.5) {$l_4$};
					\node [above] at (-2,3) {$l_5$};
					\node [above] at (-3,2.5) {$l_2$};
					\node [right] at (0.5,3.5) {$l_3$};
					\node [above] at (3,1) {$l_6$};
					\end{tikzpicture}
					\subcaption{$ \{l_1,l_2,l_6\} \cup \{l_3,l_4,l_5\} $}
				\end{subfigure}
				\vskip\baselineskip
				\begin{subfigure}{0.45\textwidth}
					\begin{tikzpicture}[scale=0.7]
					\draw[fill,gray!30] (-1.22,0.03)--(-0.98,0.5)--(-0.8,0.03)--(-1.22,0.03);
					\draw[fill,gray!30] (0.03,-0.5)--(0.03,1.18)--(1.77,0.42)--(0.03,-0.45);
					
					\draw [red,densely dotted,thick] (-3,0) -- (4,0);
					\draw [red,densely dotted,thick] (0,3.5) -- (0,-3);
					\draw [blue,dashed,thick] (-3,2.5) -- (4,-0.5);
					\draw [blue,dashed,thick] (-2,3) -- (0.5,-3);
					\draw [green] (-2.5,-2.5) -- (0.5,3.5);
					\draw [green] (-3,-2) -- (3,1);
					
					\node [above] at (-3,0) {$l_1$};
					\node [above] at (0,3.5) {$l_4$};
					\node [above] at (-2,3) {$l_5$};
					\node [above] at (-3,2.5) {$l_2$};
					\node [right] at (0.5,3.5) {$l_3$};
					\node [above] at (3,1) {$l_6$};
					\end{tikzpicture}
					\subcaption{$ \{l_1,l_3,l_5\} \cup \{l_2,l_4,l_6\} $}
				\end{subfigure}
				~
				\begin{subfigure}{0.45\textwidth}
					\begin{tikzpicture}[scale=0.7]
					\draw[fill,gray!30] (-0.72,-0.03)--(-0.43,-0.69)--(0.89,-0.03)--(-0.72,-0.03);
					\draw[fill,gray!30] (-0.03,1.22)--(-0.03,2.4)--(-0.49,1.46)--(0.03,1.22);
					
					\draw [red,densely dotted,thick] (-3,0) -- (4,0);
					\draw [red,densely dotted,thick] (0,3.5) -- (0,-3);
					\draw [blue,dashed,thick] (-3,2.5) -- (4,-0.5);
					\draw [blue,dashed,thick] (-2,3) -- (0.5,-3);
					\draw [green] (-2.5,-2.5) -- (0.5,3.5);
					\draw [green] (-3,-2) -- (3,1);
					
					\node [above] at (-3,0) {$l_1$};
					\node [above] at (0,3.5) {$l_4$};
					\node [above] at (-2,3) {$l_5$};
					\node [above] at (-3,2.5) {$l_2$};
					\node [right] at (0.5,3.5) {$l_3$};
					\node [above] at (3,1) {$l_6$};
					\end{tikzpicture}
					\subcaption{$ \{l_1,l_5,l_6\} \cup \{l_2,l_3,l_4\} $}
				\end{subfigure}
			\end{center}
			\caption{A counterexample to the conjecture $k(2,r) = r$.}
			\label{fig-ceg}
		\end{figure}
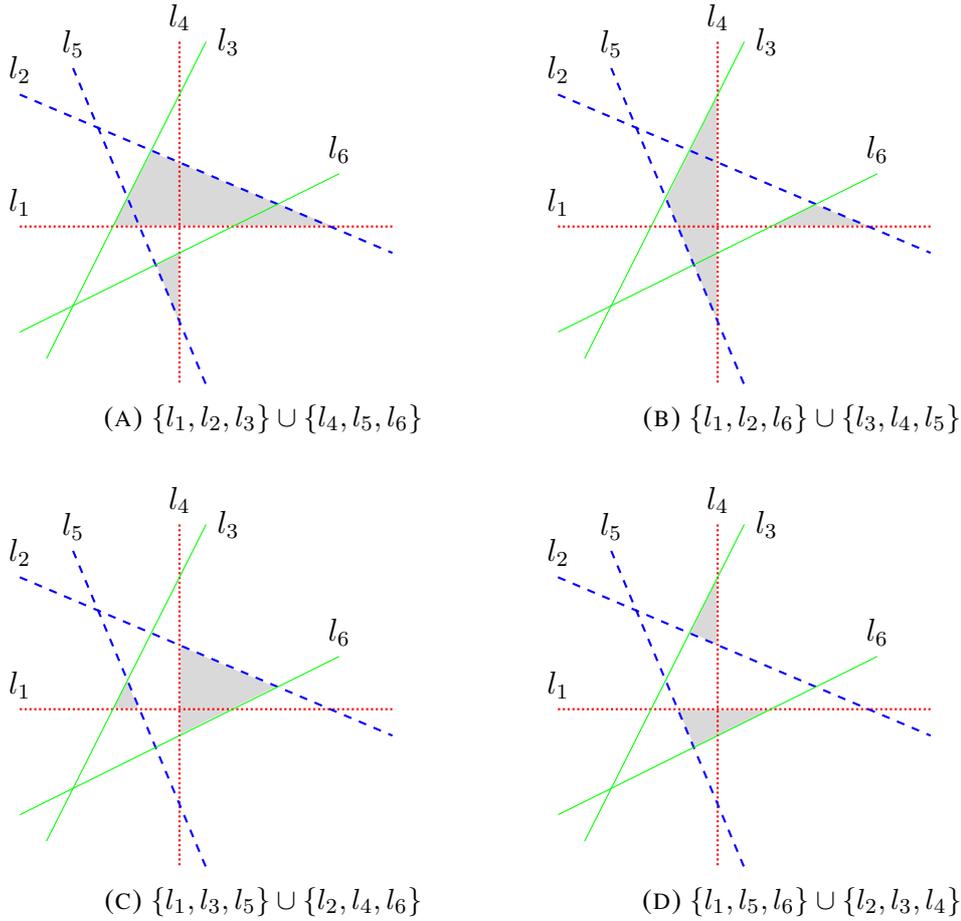
		
		Now we make an arbitrarily large counterexample. For a fixed odd number $t$, we make $t$ copies for each line in $\LL$. The copied lines are perturbed a little from the original lines so that the copies form a new set $\LL^{(t)}$ of lines in general position, and every selection of copies, one for each original line, gives a set of lines whose arrangement is isomorphic to that of $\LL$. Also, each copy is painted by the same color with its original line.
		
		To show that $\LL^{(t)}$ is a counterexample, it is sufficient to show that in any partition of $\LL^{(t)}$ into colorful subsets, there are two parts $L_1$ and $L_2$ such that every line $l$ in $\LL$ has a copy in $L_1 \cup L_2$, which makes $L_1\cup L_2$ isomorphic to $\LL$ as a line arrangement. Combinatorially, this is same as asking whether it is possible to choose $2t$ facets from the regular octahedron, possibly multiple times, so that there are no pairs of opposite facets among selected ones and each vertex is covered by the facets exactly $t$ times. The only possible way to do this is first taking 4 facets so that any two of them interset exactly at a vertex, and choosing each of them exactly $t/2$ times. And this is impossible when $t$ is odd.
	\end{eg}

	Even though the general conjecture is false, we can show that Karasev's conjecture is true when we add an additional condition on the arrangement of lines. We say that lines $l_1, \dots, l_n$ in the plane $\R^2$ are \textit{in convex position} if they are in general position and the complement $\R^2 \setminus (\bigcup_{i=1}^n l_i)$ has a connected component whose boundary meets every line. And here is our main result.
	
	\begin{thm}\label{thm-line}
		For any set of $3k$ lines in $\R^2$ in convex position which is partitioned into $3$ color classes of size $k$, the set can be partitioned into $k$ colorful subsets such that all the triangles formed by the subsets have a point in common.
	\end{thm}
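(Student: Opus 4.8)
The plan is to fix a single point $O$ and reduce the existence of a common point to a purely combinatorial statement about directions. Write each line as $\ell_i=\{x:\inprod{x}{u_i}=c_i\}$, where $u_i$ is the unit outer normal of the central cell $C$ guaranteed by convex position, so that $C=\bigcap_i\{x:\inprod{x}{u_i}\le c_i\}$. First I would establish the following clean dictionary: for any interior point $O\in C$ and any triple $\{a,b,c\}$, the intersection $\bigcap_{i\in\{a,b,c\}}\{x:\inprod{x}{u_i}\le c_i\}$ always contains $C\ni O$; it is bounded---and therefore equals the bounded triangle cut out by $\ell_a,\ell_b,\ell_c$---if and only if its recession cone is trivial, i.e. if and only if $u_a,u_b,u_c$ positively span the plane ($0\in\mathrm{int}\,\conv\{u_a,u_b,u_c\}$), in which case the triangle contains $O$. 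Conversely, if they do not positively span, this intersection is unbounded, the bounded triangle is a different cell, and it misses the interior point $O$. Hence the triangle of $\{a,b,c\}$ contains $O$ precisely when $\{u_a,u_b,u_c\}$ positively spans, independently of the distances $c_i$; so every triangle of a partition contains the \emph{same} point $O\in C$ as soon as each colourful triple is spanning in this sense. When $C$ is bounded the normals $u_1,\dots,u_{3k}$ have all cyclic gaps $<\pi$; when $C$ is unbounded I would first apply a projective transformation sending a faraway line to infinity so as to make $C$ bounded, which preserves both convex position and the relation ``point lies in triangle'', reducing to the bounded case.

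\textbf{The combinatorial core.} After this reduction the theorem becomes a statement about $3k$ directions $u_1,\dots,u_{3k}$ on the unit circle, coloured with $3$ colours $k$ each and having all cyclic gaps $<\pi$: they can be partitioned into $k$ colourful triples each of which positively spans the plane (equivalently, each triple has all three arcs between cyclically consecutive members $<\pi$). I would prove this by induction on $k$. The base case $k=1$ is exactly the hypothesis that the whole set positively spans. For the inductive step I would locate one colourful spanning triple $T$ whose deletion leaves the remaining $3(k-1)$ points still positively spanning---still with all gaps $<\pi$---so that the induction hypothesis applies. Since deleting a point merges its two neighbouring gaps, calling a point \emph{safe} when the sum of its two adjacent gaps is $<\pi$, it suffices to find a colourful spanning triple of pairwise non-adjacent safe points. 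As an alternative I would keep in reserve the option of encoding ``colourful spanning triple'' as the edges of an auxiliary graph and producing the whole partition at once by a Hall-type matching argument, with the cyclic convex order supplying the matching condition.

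\textbf{Main obstacle.} The crux is exactly this selection step: guaranteeing a colourful spanning triple whose deletion does not open a gap of size $\ge\pi$. Unsafe points can cluster only near the few large gaps, so for large $k$ safe points are plentiful; the genuine difficulty is meeting the colourful, spanning, and non-adjacency constraints simultaneously for adversarial colourings and small $k$, which is precisely where the cyclic (convex) structure must be exploited. It is instructive that this is the feature absent from the general-position counterexample of the previous Example, whose obstruction is the parity of an octahedral covering: convex position forces the directions to genuinely surround $O$ (all gaps $<\pi$) and thereby destroys that parity obstruction. Once the core lemma is in hand, choosing any $O\in C$ together with the partition it provides yields $k$ colourful triangles all containing $O$, which completes the proof.
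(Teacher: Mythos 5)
Your dualization is sound and is in fact the same first move as the paper: the point the paper assigns to a line (the intersection of a small circle in the central cell with the perpendicular ray toward the line) is exactly your outer normal direction, and your ``positively spanning triple'' is exactly the paper's ``bounding $3$-set'' (convex hull of the three directions contains the center). The genuine gap is in the combinatorial core: the statement you reduce to --- that $3k$ directions with all cyclic gaps $<\pi$, coloured $k$--$k$--$k$, can always be partitioned into $k$ colourful positively spanning triples --- is \emph{false}, so no induction or matching argument can establish it. Take $k=2$ with directions at angles $0^\circ$ and $12^\circ$ (red), $20^\circ$ and $165^\circ$ (green), $190^\circ$ and $347^\circ$ (blue); all six gaps ($12,8,145,25,157,13$) are below $180^\circ$ and there are no equal or antipodal pairs, so tangent lines to a circle at these directions are in convex position. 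Checking the four colourful partitions: $\{0,20,190\}\cup\{12,165,347\}$ has second-triple arcs $153,182,25$; $\{0,20,347\}\cup\{12,165,190\}$ has first-triple arcs $20,327,13$; $\{0,165,190\}\cup\{12,20,347\}$ has second-triple arcs $8,327,25$; and $\{0,165,347\}\cup\{12,20,190\}$ has first-triple arcs $165,182,13$. In every case some triple has an arc exceeding $180^\circ$ and hence fails to span, so no partition into all-spanning colourful triples exists. Demanding that every triangle contain the \emph{same prescribed} interior point $O$ of the central cell is strictly stronger than the theorem and cannot be met.

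The paper's proof gets around exactly this: it permits unbounding (non-spanning) triples but controls \emph{where} they fail, proving that one can always find a colourful partition whose ``middle points'' (the direction in the short arc of each unbounding triple) are consecutive on the circle, i.e.\ occupy an arc of points of $P$ with no other points of $P$ interleaved and contained in a semicircle. The common point is then not the centre $O$ but the intersection $q$ of the two extreme lines bounding that arc; for a middle point one takes the half-plane on the far side of the corresponding line, and consecutiveness guarantees all these half-planes still share the point $q$. Establishing that such a partition exists is the real work (a geometric-join/starshapedness argument to handle the case with no bounding colourful triples, Hall's theorem to realize colourful partitions respecting a circular ordering, and an exchange argument that shrinks the arc of middle points), and none of it is reachable from your formulation. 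If you want to salvage your framework, you must weaken the target from ``every triple spans'' to a condition of this consecutive-middle-points type and allow the witness point to depend on the partition.
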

	
	%%%%%%%%%%%%%%%%%%%%%%%%%%%%%%%%%%%%%%%%%%%%%%%%%%%%
	% Comment 1: 65: Correct "organaized".
	%%%%%%%%%%%%%%%%%%%%%%%%%%%%%%%%%%%%%%%%%%%%%%%%%%%%
	
	This note is organized as follows. In Section \ref{section-pre}, we reduce Theorem \ref{thm-line} to its dual version Theorem \ref{thm1} below and make some observations for the proof of Theorem \ref{thm1}. In Section \ref{section-proof}, we prove Theorem \ref{thm1}. Finally in Section \ref{section-final}, we discuss possible generalizations of Theorem \ref{thm-line}.
	
	\section{Preliminaries}\label{section-pre}
	In this section, we reduce Theorem \ref{thm-line} to its dual version Theorem \ref{thm1} below and make some observations which are useful in the proof of Theorem \ref{thm1}. Before that, let us introduce some conventions we use in the remainder of this note.
	
	By \textit{a $k$-set} (or \textit{a $k$-subset}), we mean a set (or a subset, respectively) of size $k$. For a set $P$ which is partitioned into color classes, a subset $Q$ of $P$ is called \textit{colorful} if it contains exactly one from each color class. And when we consider a partition of a set, we naturally identify it with the family of all parts from the partition.
	
	\subsection{Reduction to a dual version on a circle}
	In this subsection, we state a dual version of Theorem \ref{thm-line} regarding points on a circle, and show how it implies Theorem \ref{thm-line}. 
	\medskip

	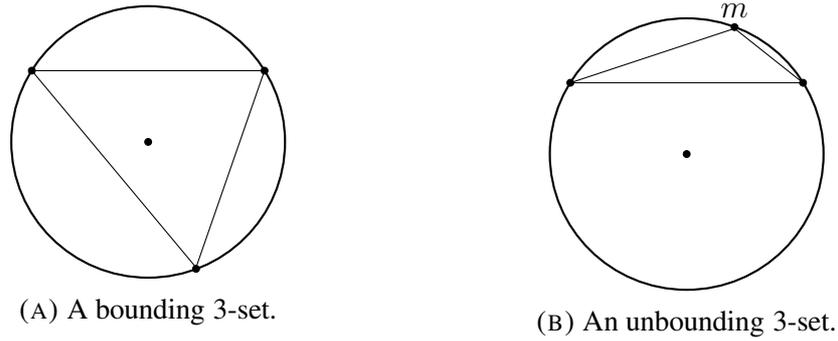
\begin{figure}[ht]
		\begin{center}
			\begin{subfigure}{0.45\textwidth}
				\centering
				\begin{tikzpicture}[scale=0.9]
				\draw [thick] (0,0) circle [radius = 2];
				
				\draw[fill] (0,0) circle [radius = 0.05];
				
				\draw (-1.7,1.05) -- (1.7,1.05);
				\draw (1.7,1.05) -- (0.7,-1.85);
				\draw (0.7,-1.85) -- (-1.7,1.05);
				
				\draw [fill] (-1.7,1.05) circle [radius = 0.05];
				\draw[fill] (1.7,1.05) circle [radius = 0.05];
				\draw[fill] (0.7,-1.87) circle [radius = 0.05];
				
				\end{tikzpicture}
				\subcaption{A bounding 3-set.}
			\end{subfigure}
			~
			\begin{subfigure}{0.45\textwidth}
				\centering
				\begin{tikzpicture}[scale=0.9]
				\draw [thick] (0,0) circle [radius = 2];
				
				\draw[fill] (0,0) circle [radius = 0.05];
				
				\draw (-1.7,1.05) -- (1.7,1.05);
				\draw (1.7,1.05) -- (0.7,1.85);
				\draw (0.7,1.85) -- (-1.7,1.05);
				
				\draw [fill] (-1.7,1.05) circle [radius = 0.05];
				\draw[fill] (1.7,1.05) circle [radius = 0.05];
				\draw[fill] (0.7,1.87) circle [radius = 0.05];
				\node[above] at (0.7,1.85) {$m$};
				
				\end{tikzpicture}
				\subcaption{An unbounding 3-set.}
			\end{subfigure}
		\end{center}
		\caption{Two types of 3-sets on a circle.}
		\label{fig-def-triple}
	\end{figure}
	
	A 3-set of points on a circle without antipodal pairs is said to be \textit{bounding} if its convex hull contains the center of the circle, and \textit{unbounding} otherwise.
	For any unbounding 3-sets, there is a unique point in the set which is contained in the shorter arc connecting the other two points, and we call this a \textit{middle point} of the 3-set (In Figure \ref{fig-def-triple}-(B), the middle point is labelled by $m$).
	
	Let $P$ be a set of $3k$ points on a circle $S$ partitioned into 3 color classes of equal size, and let $\calP$ be a partition of $P$ into disjoint 3-subsets. The partition $\calP$ is said to be \textit{colorful} if all parts of the partition $\calP$ are colorful. If a point $p$ is a middle point of some unbounding 3-set in $\calP$, then we say that the point $p$ is a \textit{middle point of the partition $\calP$}. Finally, we say that the middle points of a partition $\calP$ are \textit{consecutive} if
	\begin{itemize}
		\item there is a semicircle in $S$ containing all middle points of $\calP$, and
		\item the shortest closed arc in $S$ containing all middle points has no other points from $P$.
	\end{itemize}
	For convenience, we also say that the middle points of $\calP$ are consecutive even in the case when there are no middle points of $\PP$.
	
	\medskip
	
	Now, we state the dual version of Theorem \ref{thm-line}.
	
	\begin{thm}\label{thm1}
		Let $P$ be a set of $3k$ points on a circle without antipodal pairs, which is partitioned into $3$ color classes of size $k$. Then there exists a partition of $P$ into $k$ disjoint colorful 3-sets whose middle points are consecutive.
	\end{thm}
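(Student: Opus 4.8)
The plan is to prove Theorem \ref{thm1} by induction on $k$, and the first thing to record is which parts of the statement are \emph{local} to a single triple and which are \emph{global}. Whether a colorful $3$-set is bounding or unbounding, and in the unbounding case what its middle point is, depends only on the three points of that $3$-set and not on the rest of $P$; the hypothesis that $P$ has no antipodal pairs guarantees that the shorter arc is strictly shorter, so each middle point is unambiguously defined. By contrast, the two requirements in the definition of \emph{consecutive} — that all middle points lie in a common semicircle and that the shortest arc spanning them contain no other point of $P$ — couple all the triples together. Of these, the semicircle requirement is genuinely restrictive, since a block of consecutive points can span more than a semicircle; so the guiding idea is to keep the middle points few and confined to a short arc.

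Before starting the induction I would fix a target arc. Writing the $3k$ points in cyclic order and letting $g_1,\dots,g_{3k}$ be the arcs between consecutive points, each gap lies in exactly $k-1$ of the windows formed by $k-1$ consecutive gaps, so the $3k$ such windows have total length $(k-1)\cdot 2\pi$ and the average window has length less than $2\pi/3<\pi$; hence some run $A$ of $k$ consecutive points spans less than a semicircle. I fix such an $A$ once and for all. Since every middle point we ever create will be required to lie in $A$, and $A$ lies in a semicircle, the semicircle requirement becomes automatic, and only consecutiveness inside $A$ must be tracked.

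The inductive step removes one colorful $3$-set while maintaining the invariant that all middle points produced so far lie in $A$ and form a consecutive block. The easy case is when there is a bounding colorful $3$-set all of whose vertices lie outside $A$: removing it leaves $3(k-1)$ points with balanced colors, the induction hypothesis partitions them with their middle points consecutive inside $A$, and the removed set contributes no middle point and no vertex inside $A$, so the block is undisturbed. The remaining case is when no such bounding $3$-set exists; here I would instead peel off a colorful $3$-set that is unbounding with middle point at the boundary of the current block inside $A$, so that reattaching it merely extends the consecutive block by one point while keeping it inside the semicircle.

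The main obstacle is exactly this selection step: one must show that, for every admissible color pattern, a colorful $3$-set of one of the two required kinds can always be removed without destroying the color balance of what remains, and that the block of middle points, while it may grow, never leaves $A$ (equivalently, never exceeds a semicircle). I expect to establish the existence of the right $3$-set by a Hall-type, extremal argument on the cyclic sequence of colors — tracking, as one sweeps the circle, the running color counts relative to $A$ — and to control the growth of the block by always extending it on the side dictated by the sweep. Getting these two bookkeeping facts to hold simultaneously, so that the induction closes, is where the real work of the proof lies.
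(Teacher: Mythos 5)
There is a genuine gap, and the first half of your plan fails outright: the reference arc $A$ cannot be fixed in advance by an averaging argument that never looks at the colors. Take all $3k$ points clustered inside a tiny arc, so that every $3$-subset is unbounding and the middle point of a triple is simply its median in the linear order along the cluster. Your averaging step then selects some run of $k$ consecutive points of the cluster as $A$. But if all $k$ middle points are to lie in positions $[i,i+k-1]$, each triple must contain at least two points in positions $\le i+k-1$ and at least two in positions $\ge i$, which forces $i=k+1$: the \emph{only} admissible run is the exact middle third of the cluster. Your averaging argument gives no control over which run it produces, so the invariant ``all middle points lie in $A$'' is unachievable for a generic choice of $A$. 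The correct reference point has to be extracted from the geometry of the colorful triples themselves; this is what Proposition \ref{join} does, using the starshapedness of the geometric join $\gj(C_1,C_2,C_3)$ to find a point $q$ on $S$ whose radius misses the join, after which Hall's theorem (Proposition \ref{matching}) forces the middle points into the middle third of the clockwise ordering from $q$. There is also a structural mismatch in the induction: after deleting a triple disjoint from $A$, the remaining $3(k-1)$ points still contain all $k$ points of $A$, so the strengthened statement you would need to invoke is not of the same shape as the one you are proving.

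The second half of your plan defers exactly the hard part. The step you describe as ``where the real work of the proof lies'' --- showing that for every admissible color pattern one can peel off a colorful $3$-set of the right kind while preserving color balance and keeping the block of middle points inside a semicircle --- is the core of the theorem, and a Hall-type sweep on running color counts does not by itself produce it. The paper's argument is organized quite differently: it starts from a colorful partition minimizing the number of unbounding triples, upgrades it (via Observation \ref{lem2}) to one whose middle points lie in a semicircle and whose spanning arc contains only middle points and vertices of bounding triples, and then runs an extremal descent (Lemma \ref{lem3}) in which one offending bounding triple and the two extreme unbounding triples --- nine points in all --- are locally repartitioned so that the spanning arc strictly shrinks. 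The existence of that local repartition rests on a delicate case analysis over the colors and positions of those nine points, driven throughout by the minimality of the number of unbounding triples. Your proposal contains no analogue of the minimality device, the descent, or the nine-point surgery, so the induction does not close.
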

	
	Before proving this theorem in the next section, we first show that Theorem \ref{thm1} implies Theorem \ref{thm-line}.
	
	\begin{proof}[Proof of Theorem \ref{thm-line}]
		Let $\LL$ be a given set of $3k$ lines in convex position.
		We choose a sufficiently small circle $S$ so that it is contained in a connected component of the complement $\R^2 \setminus (\bigcup_{l \in \LL} l)$ which meets every line in $\LL$. For each line $l$ in $\LL$, let $p(l)$ be the intersection point of $S$ with a ray from the center of $S$, which perpendicularly intersects $l$. And paint the point $p(l)$ with the color of $l$.
		If we let $P =\{p(l) : l \in \LL\}$, then $P$ has no antipodal pairs and is partitioned into $3$ color classes.
		
		Let $\calP$ be a partition of $P$ given by Theorem \ref{thm1}. With respect to the partition $\calP$, we make each line $l$ in $\FF$ correspond to a halfspace $H(l)$ in the following way: 
		\begin{itemize}
			\item If $p(l)$ is a middle point of $\calP$, then let $H(l)$ be the closed halfplane bounded by $l$ which does not contain the center of $S$.
			\item Otherwise, let $H(l)$ be the closed halfplane bounded by $l$ which contains the center.
		\end{itemize} 
		Then for each part $\{p(l_1),p(l_2),p(l_3)\}$ in the partition $\calP$, the 3-set $\{l_1, l_2, l_3\}$ is colorful and the triangle formed by $\{l_1, l_2, l_3\}$ is equal to the intersection of halfplanes $H(l_1) \cap H(l_2) \cap H(l_3)$.
		
		So, it is sufficient to show that the intersection $\bigcap_{l \in \LL}H(l)$ is nonempty. If there is at most one middle point of $\PP$, then this is obvious. So we assume that there are at least two middle points of $\PP$. Let $\gamma$ be the shortest closed arc in $S$ containing all the middle points of $\calP$, and let $l_a$ and $l_b$ be the members in $\LL$ such that $p(l_a)$ and $p(l_b)$ are the boundary points of $\gamma$. And let $q$ be the intersection point of $l_a$ and $l_b$. Since the arc $\gamma$ is contained in a semicircle, for each $l \in \LL$, the halfplane $H(l)$ contains the common point $q$.
	\end{proof}

	\subsection{A colorful partition with respect to a circular ordering}
	In this subsection, we consider some observations which guarantee the existence of a colorful partition with respect to a circular ordering. We first recall a concept, which was introduced and investigated by B\'{a}r\'{a}ny, Holmsen and Karasev \cite{join1}.
	%%%%%%%%%%%%%%%%%%%%%%%%%%%%%%%%%%%%%%%%%%%%%%%%%%%%
	% Comment 2: 117: In Definition 2.2 $P_i$ seems to denote the same as $C_i$, please clarify.
	%%%%%%%%%%%%%%%%%%%%%%%%%%%%%%%%%%%%%%%%%%%%%%%%%%%%
	\begin{Def}
		The \textbf{geometric join} of $m$ point sets $C_1, \dots, C_m$ in the Euclidean space $\R^d$, denoted by $\gj(C_1, \dots, C_m)$, 
		is the set of all convex combinations $\sum_{j=1}^m t_jp_j \in \R^d$ where $p_j \in C_j$, $t_j \geq 0$ and $\sum_{j=1}^m t_j =1$.
	\end{Def}
	
	In particular, in the plane, it was shown that the geometric join of 3 point sets is starshaped \cite{join2, join3}. Using this, we can prove following proposition.
	
	\begin{prop}
		\label{join}
		Let $P$ be a set of points on a unit circle $S$ centered at the origin $O$, partitioned into 3 nonempty color classes $C_1$, $C_2$ and $C_3$. If all colorful 3-subsets of $P$ are unbounding, then there is a point $q$ on $S$ such that the closed line segment connecting $q$ and $O$ is disjoint from $\gj(C_1,C_2, C_3)$.
	\end{prop}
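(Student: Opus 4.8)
The plan is to use the one nontrivial input we are allowed to quote, namely that the planar geometric join $\gj(C_1,C_2,C_3)$ is starshaped, together with the unbounding hypothesis. Write $G=\gj(C_1,C_2,C_3)$ for brevity. First I would unpack the definition of the join to record that $G$ is exactly the union $\bigcup \conv\{p_1,p_2,p_3\}$ taken over all colorful triples $p_j\in C_j$. Since every such triple is unbounding, none of these convex hulls contains the center, so $O\notin G$. Moreover all of $C_1,C_2,C_3$ lie on the unit circle $S$, so $G$ is contained in the closed unit disk; in particular every point of $G$ is different from $O$, and the radial projection $\pi\colon x\mapsto x/|x|$ is defined on all of $G$.

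The heart of the argument is to show that the radial shadow $\pi(G)$ is a \emph{proper} subset of $S$: any direction missing from $\pi(G)$ will immediately give the radius we want. I would prove this by contradiction. Suppose $\pi(G)=S$. Let $s$ be a star center of $G$, so that $s\in G$ and $\overline{sx}\subseteq G$ for every $x\in G$; note $s\neq O$ because $O\notin G$, so the antipodal direction $-s/|s|$ is well defined. Since $\pi(G)$ is assumed to be all of $S$, this direction is attained by some $y\in G$. Then $s$ and $y$ lie on opposite rays from $O$, so the segment $\overline{sy}$ passes through $O$; by starshapedness $\overline{sy}\subseteq G$, which forces $O\in G$ and contradicts the previous paragraph. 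Hence $\pi(G)\subsetneq S$.

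Finally I would choose $q\in S$ whose direction does not lie in $\pi(G)$. Every point of the punctured radius $\{tq:0<t\le 1\}$ projects to the direction of $q$, which is outside $\pi(G)$, so no such point belongs to $G$; combined with $O\notin G$ this gives $\overline{qO}\cap G=\emptyset$, exactly the conclusion sought.

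Granting the cited starshapedness, the whole argument is elementary, so the only places that need care are the two translation steps. The first is the passage from \textquotedblleft all colorful triples are unbounding\textquotedblright\ to \textquotedblleft $G$ misses $O$\textquotedblright, which relies on $G$ being precisely the union of the colorful triangles rather than some larger convex region. The second, and the step I expect to be the genuine crux, is the antipodal/star-center trick: I must make sure the chosen star center is distinct from $O$ (guaranteed by $O\notin G$) so that its opposite direction makes sense, and that the segment from the star center to a point in that opposite direction really sweeps through $O$.
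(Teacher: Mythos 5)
Your proposal is correct and follows essentially the same route as the paper: both take a star center $s$ of the (starshaped) geometric join, observe that the join misses $O$ because it is the union of the colorful triangles and all of these are unbounding, and conclude that the antipodal radius in the direction $-s/\|s\|$ cannot meet the join, since any point of the join on that ray would force the segment back to $s$ (contained in the join by starshapedness) to pass through $O$. Your extra packaging via the radial projection $\pi$ is harmless and the endpoint checks (that $s\neq O$ and that the closed segment including $O$ itself is avoided) are exactly the points the paper also relies on.
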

	\begin{proof}
		Since $\gj(C_1,C_2, C_3)$ is starshaped, we can find a point $p$ in $\gj(C_1,C_2, C_3)$ so that the point $p$ can see every point in $\gj(C_1,C_2, C_3)$. When $t$ is a nonnegative scalar, $t\cdot (-p)$ is not contained in $\gj(C_1,C_2, C_3)$. Otherwise the origin $O$ must be contained in $\gj(C_1,C_2, C_3)$, which implies there is a colorful bounding 3-set in $P$ leading to a contradiction. Therefore, we can choose the point $-p/||p||$ as $q$ on $S$.
	\end{proof}
	
	The following proposition gives a combinatorial characterization to have a certain colorful partition.
	
	\begin{prop}\label{matching}
		Let $P$ be a set of $3k$ elements, which is partitioned into 3 subsets $C_1$, $C_2$ and $C_3$ of size $k$. Suppose that there is another partition of $P$ into 3 subsets $A_1$, $A_2$ and $A_3$ of size $k$. Then, we can find a partition of $P$ into $k$ 3-sets $T_1, \dots, T_k$ such that $|T_i \cap A_j|=|T_i \cap C_j|=1$ for all $i\in \{1,\dots, k\}$ and $j \in \{1,2,3\}$.
	\end{prop}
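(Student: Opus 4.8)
The plan is to reinterpret the desired partition as a decomposition of an integer matrix with constant line sums into permutation matrices. For $j,l \in \{1,2,3\}$ set $m_{jl} = |C_j \cap A_l|$ and form the $3 \times 3$ nonnegative integer matrix $M = (m_{jl})$. Since $|C_j| = k$ for each $j$, we have $\sum_{l} m_{jl} = k$, and since $|A_l| = k$ for each $l$, we have $\sum_{j} m_{jl} = k$; thus every row sum and every column sum of $M$ equals $k$.

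Next I would observe that a triple $T$ with $|T \cap C_j| = |T \cap A_l| = 1$ for all $j,l$ is exactly a choice of one element from each $C_j$ whose $A$-classes are pairwise distinct. Recording, for each $j$, the index $l$ for which the chosen element of $C_j$ lies in $A_l$ produces a permutation $\sigma$ of $\{1,2,3\}$, and the triple consumes one element of each $C_j \cap A_{\sigma(j)}$. Hence a partition of $P$ into $k$ such triples corresponds to a decomposition $M = \Pi_{\sigma_1} + \cdots + \Pi_{\sigma_k}$ into permutation matrices, where $\Pi_\sigma$ carries its entries $1$ in the cells $(j,\sigma(j))$. So it suffices to prove the integer version of the Birkhoff--von Neumann statement: a nonnegative integer matrix all of whose line sums equal $k$ is a sum of $k$ permutation matrices.

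I would prove this by induction on $k$, peeling off one permutation at a time. Form the bipartite multigraph on vertex classes $\{C_1,C_2,C_3\}$ and $\{A_1,A_2,A_3\}$ with $m_{jl}$ edges joining $C_j$ and $A_l$; this graph is $k$-regular. The key step, and the only place anything must be checked, is that it admits a perfect matching. This follows from Hall's condition: for a set $S$ of left vertices, the $k|S|$ edges leaving $S$ all land in its neighbourhood $N(S)$, and each right vertex absorbs at most $k$ of them, so $k|S| \le k\,|N(S)|$ and hence $|N(S)| \ge |S|$. A perfect matching yields a permutation $\sigma$ with $m_{j,\sigma(j)} > 0$ for all $j$; subtracting $\Pi_\sigma$ from $M$ leaves a nonnegative integer matrix with all line sums equal to $k-1$, and the induction proceeds (the base case $k=0$ being vacuous).

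Finally I would translate the matrix decomposition back into an explicit partition of $P$. The decomposition uses cell $(j,l)$ a total of $m_{jl} = |C_j \cap A_l|$ times across the $\sigma_i$, so I can fix, for each cell, a bijection between these $m_{jl}$ occurrences and the elements of $C_j \cap A_l$. Assigning elements accordingly and grouping by the index $i$ of $\sigma_i$ produces triples $T_1, \dots, T_k$, each meeting every $C_j$ and every $A_l$ exactly once, as required. The main obstacle is purely the matching step, but the $k$-regularity makes Hall's condition immediate, so no real difficulty arises.
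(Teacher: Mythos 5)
Your proposal is correct and is essentially the paper's argument in different clothing: both proceed by induction on $k$, using Hall's theorem to extract one transversal of the $C_j$'s and $A_l$'s at a time (your edge-counting verification of Hall's condition on the $k$-regular multigraph is the same computation as the paper's pigeonhole argument). The reformulation via the $3\times 3$ intersection matrix and Birkhoff--von Neumann is a cosmetic repackaging rather than a genuinely different route.
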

	\begin{proof}
		We use induction on $k$. Since it is obvious when $k=1$, suppose that $k>1$. Define a bipartite graph $G$ with $V(G) = \{A_1,A_2,A_3\}\,\cup\,\{C_1,C_2,C_3\}$  and $E(G) = \{A_iC_j : A_i \cap C_j \ne \emptyset\}$.
		By the Pigeonhole principle, union of any $t$ of $C_1$, $C_2$, $C_3$ intersects with at least $t$ of $A_1$, $A_2$, $A_3$ for each $t\in \{1,2,3\}$.
		Thus there exists a perfect matching $M$ in $G$ by Hall's theorem \cite{hall}. Choose $x_i \in A_i \cap C_j$ for each $A_iC_j \in M$, then let $T = \{x_1, x_2, x_3\}$.
		Then we have $|T\cap A_j|=|T\cap C_j|=1$ for every $j \in \{1,2,3\}$. By adding $T$ to the partition for $P\setminus T$ obtained by the induction hypothesis, we can get the desired partition.
	\end{proof}

	\begin{figure}[ht]
		\begin{center}
			\begin{subfigure}[{caption for a}]{0.45\textwidth}
				\centering
				\begin{tikzpicture}[scale=0.9]
				\draw[thick] (0,0) circle [radius = 2];
				\draw[fill] (0,0) circle [radius = 0.05];
				\draw[ thick,->] (0,-2) arc [radius = 2, start angle = -90, end angle = -110];
				
				\draw[dotted] (-1.9,-0.7)--(0.8,1.85);
				\draw[dotted] (-0.5,1.95)--(1.75,-1);
				
				\draw[ultra thick] (1.75,-1) arc [radius = 2.02, start angle = -30, end angle = 200];
				
				\node[right] at (1.75,-1) {$p_6$};
				\draw[green,fill,shift={(1.75,-1)}] (0,0) circle [radius = 0.15];
				
				\node[left] at (-1.9,-0.7) {$p_1$};
				\draw[green,fill,shift={(-1.9,-0.7)}] (0,0) circle [radius = 0.15];
				
				\node[right] at (1.8,0.95) {$p_5$};
				\draw [red,fill,shift={(1.8,0.95)}] (0,0.13) -- (0.13,-0.1) -- (-0.13,-0.1) -- (0,0.13);
				
				\node[above] at (-0.5,1.95) {$p_3$};
				\draw [red,fill,shift={(-0.5,1.95)}] (0,0.13) -- (0.13,-0.1) -- (-0.13,-0.1) -- (0,0.13);
				
				\node[above] at (0.8,1.85) {$p_4$};
				\draw [blue,fill,shift={(0.8,1.85)}] (-0.11,-0.11) rectangle (0.11,0.11);
				
				\node[left] at (-1.6,1.25) {$p_2$};
				\draw [blue,fill,shift={(-1.6,1.25)}] (-0.11,-0.11) rectangle (0.11,0.11);
				
				\node[below] at (0,-1.5) {$p$};
				\draw[fill] (0,-2) circle [radius = 0.06];
				\end{tikzpicture}
				\subcaption{Points satisfying the both conditions.}
				\label{fig-lem2-1}
			\end{subfigure}
			~\begin{subfigure}[{caption for a}]{0.45\textwidth}
				\centering
				\begin{tikzpicture}[scale=0.9]
				\draw[thick] (0,0) circle [radius = 2];
				\draw[fill] (0,0) circle [radius = 0.05];				
				
				%1 - 4 - 5
				\draw (1.75,-1)--(-0.5,1.95)--(-1.6,1.25)--(1.75,-1);
				
				%2 - 3 - 6
				\draw (1.8,0.95)--(0.8,1.85)--(-1.9,-0.7)--(1.8,0.95);
				
				\node[right] at (1.75,-1) {$p_6$};
				\draw[green,fill,shift={(1.75,-1)}] (0,0) circle [radius = 0.15];
				
				\node[left] at (-1.9,-0.7) {$p_1$};
				\draw[green,fill,shift={(-1.9,-0.7)}] (0,0) circle [radius = 0.15];
				
				\node[right] at (1.8,0.95) {$p_5$};
				\draw [red,fill,shift={(1.8,0.95)}] (0,0.13) -- (0.13,-0.1) -- (-0.13,-0.1) -- (0,0.13);
				
				\node[above] at (-0.5,1.95) {$p_3$};
				\draw [red,fill,shift={(-0.5,1.95)}] (0,0.13) -- (0.13,-0.1) -- (-0.13,-0.1) -- (0,0.13);
				
				\node[above] at (0.8,1.85) {$p_4$};
				\draw [blue,fill,shift={(0.8,1.85)}] (-0.11,-0.11) rectangle (0.11,0.11);
				
				\node[left] at (-1.6,1.25) {$p_2$};
				\draw [blue,fill,shift={(-1.6,1.25)}] (-0.11,-0.11) rectangle (0.11,0.11);
				
				\end{tikzpicture}
				\subcaption{The obtained colorful partition.}
			\end{subfigure}
		\end{center}
		\caption{An example for Observation \ref{lem2} and \ref{obs2} when $k=2$. The geometric join of the color classes is described by its radial projection to the circle as a thick arc.}
		\label{fig-lem2}
	\end{figure}
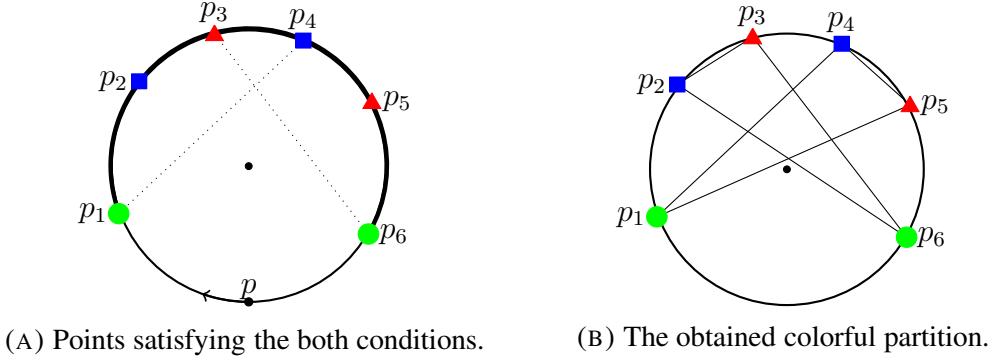
	
	Now, let $P$ be a set of $3k$ points on a unit circle $S$ centered at the origin $O$, partitioned into 3 color classes $C_1$, $C_2$ and $C_3$ of size $k$. We say that a set of $n$ points on a circle has \textit{a clockwise ordering $p_1,\dots, p_n$ from the point $q$} if, as we go along the circle $S$ clockwise from the point $q$, $p_i$ is the $i$th point we meet during the tour. For two non-antipodal points $p$ and $q$ on a circle $S$, the \textit{arc moving from $p$ to $q$ clockwise} is denoted by $\arc{p,q}$.

	\begin{obs}\label{lem2}
		If there are no colorful bounding 3-subsets of $P$, then we can find a colorful partition of $P$ with consecutive middle points.
	\end{obs}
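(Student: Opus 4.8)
The plan is to combine Proposition \ref{join} (to fix a good starting point $q$) with Proposition \ref{matching} (to build the triples), and then read off the middle points directly from the circular ordering determined by $q$.

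First, since there are no colorful bounding $3$-subsets, every colorful $3$-subset of $P$ is unbounding, so Proposition \ref{join} applies and yields a point $q \in S$ for which the segment $\overline{qO}$ is disjoint from $\gj(C_1,C_2,C_3)$; as the set of admissible directions is an open arc, after a harmless perturbation we may assume $q \notin P$. Let $p_1, \dots, p_{3k}$ be the clockwise ordering of $P$ from $q$, and cut it into three consecutive blocks $A_1 = \{p_1, \dots, p_k\}$, $A_2 = \{p_{k+1}, \dots, p_{2k}\}$ and $A_3 = \{p_{2k+1}, \dots, p_{3k}\}$, each of size $k$. Applying Proposition \ref{matching} to the two partitions $\{C_1,C_2,C_3\}$ and $\{A_1,A_2,A_3\}$ produces a partition of $P$ into $k$ triples $T_1, \dots, T_k$, each of which is colorful and meets each block $A_j$ in exactly one point; write $T_i = \{a_i, b_i, c_i\}$ with $a_i \in A_1$, $b_i \in A_2$, $c_i \in A_3$.

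Next I would identify the middle point of each triple. Each $T_i$ is colorful, hence unbounding by hypothesis, so its three points lie in an open semicircle and $O \notin \conv(T_i)$. Since $\conv(T_i) \subseteq \gj(C_1,C_2,C_3)$, the radial projection of $\conv(T_i)$ to $S$ is contained in the radial projection of the join and therefore excludes the direction $q$; this projection is exactly the minor arc $\arc{a_i,c_i}$ of angular span less than $\pi$ passing through $b_i$, while $q$ lies on the complementary major arc $\arc{c_i,a_i}$. Consequently, inside the open semicircle carrying $T_i$ the clockwise order of the points is $a_i, b_i, c_i$, so $b_i$ is the unique point lying on the shorter arc joining the other two; that is, the middle point of $T_i$ is $b_i \in A_2$. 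As the $b_i$ are distinct and $|A_2| = k$, the set of middle points of this partition is exactly $A_2$.

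Finally I would check that these middle points are consecutive. For the first condition, note that for any triple the endpoints $a_i \in A_1$ and $c_i \in A_3$ occupy positions at most $k$ and at least $2k+1$ in the ordering, so the minor arc $\arc{a_i,c_i}$ — which spans less than $\pi$ — already contains every point in positions $k+1, \dots, 2k$, i.e.\ all of $A_2$; hence a single semicircle contains all the middle points. For the second condition, since $A_2$ occupies the consecutive positions $k+1, \dots, 2k$ and lies in a semicircle, the shortest closed arc containing all middle points is $\arc{p_{k+1}, p_{2k}}$, which contains the points of $A_2$ and no other point of $P$. Thus $\{T_1, \dots, T_k\}$ is the desired colorful partition with consecutive middle points. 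The crux of the argument — and the reason Proposition \ref{join} is needed rather than an arbitrary base point — is the step pinning down the middle point as the $A_2$-coordinate: only because $q$ avoids the radial projection of the \emph{entire} join is it guaranteed to fall on the major arc of every triple simultaneously, which is exactly what forces the positional middle $b_i$ to be the genuine middle point.
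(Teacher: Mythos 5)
Your proposal is correct and follows essentially the same route as the paper: apply Proposition \ref{join} to obtain $q$, order $P$ clockwise from $q$ into blocks $A_1,A_2,A_3$, apply Proposition \ref{matching}, and use the fact that $q$ avoids the radial projection of $\gj(C_1,C_2,C_3)$ to force the $A_2$-point of each (necessarily unbounding) triple to be its middle point. You merely spell out in more detail the step the paper states tersely, namely why only $A_2$-points can be middle points and why $A_2$ is consecutive.
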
	
	\begin{proof}
		Since there are only unbounding 3-sets among colorful subsets of $P$, we can apply Proposition \ref{join} to $P$ in order to get the point $q$ in the conclusion of Proposition \ref{join}. Let $p_1, \dots, p_{3k}$ be the clockwise ordering from $q$. And let $A_j=\{p_i \,:\, (j-1)k+1\leq i \leq jk\}$ for $j\in \{1,2,3\}$. By using Proposition \ref{matching} on two partitions $\{A_1, A_2, A_3\}$ and $\{C_1, C_2, C_3\}$, we can find a colorful partition $\PP$ into 3-subsets such that each part uses exactly one from each $A_j$. Note that each part must be unbounding by the assumption. And the assumption that the line segment connecting $O$ and $p$ does not have intersection with $\gj(C_1, C_2, C_3)$ concludes that only points in $A_2$ can be used as a middle point of the partition. This readily implies that the middle points of $\PP$, which are the points in $A_2$, are consecutive.
	\end{proof}
	
	We need one last observation. As in the proof of Observation \ref{lem2}, we assume that points in $P$ has a clockwise ordering $p_1, p_2, \dots, p_{3k}$ from some point, and let $A_j=\{p_i \,:\, (j-1)k+1\leq i \leq jk\}$ for $j\in \{1,2,3\}$.
	\begin{obs}
		\label{obs2}
		Suppose that each of the arcs $\arc{p_1, p_{2k}}$ and $\arc{p_{k+1},p_{3k}}$ are strictly contained in a semicircle (but not necessarily the same semicircle). Then there exists a colorful partition $\calP$ of the set $P$ such that each part of $\PP$ contains exactly one from each $A_j$, and the set of middle points of $\PP$ is contained in $A_2$.
	\end{obs}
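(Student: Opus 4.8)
The plan is to produce the partition combinatorially with Proposition \ref{matching} and then locate the middle points purely from the two arc hypotheses. First I would apply Proposition \ref{matching} to the partitions $\{A_1,A_2,A_3\}$ and $\{C_1,C_2,C_3\}$ of $P$; this at once yields a colorful partition $\calP$ into $3$-sets in which every part meets each $A_j$ in exactly one point. The only remaining content is then to show that no part of $\calP$ can have its middle point outside $A_2$.

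To that end I would fix an arbitrary part $\{a,b,c\}$ of $\calP$ with $a\in A_1$, $b\in A_2$ and $c\in A_3$. Because $A_1$, $A_2$, $A_3$ are consecutive blocks of the clockwise order $p_1,\dots,p_{3k}$, the three points occur clockwise as $a,b,c$, splitting $S$ into the arcs $\arc{a,b}$, $\arc{b,c}$ and $\arc{c,a}$. The $3$-set is unbounding precisely when one of these arcs is longer than a semicircle, and in that case the middle point is the vertex not incident to the long arc: it is $b$ if $\arc{c,a}$ is long, $c$ if $\arc{a,b}$ is long, and $a$ if $\arc{b,c}$ is long. (At most one arc can be long, since the three arcs sum to the full circle.) So I only need to exclude the latter two cases.

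This is exactly where the hypotheses are used. Since $a\in A_1$ and $b\in A_2$ both lie on $\arc{p_1,p_{2k}}$ and $a$ precedes $b$ in the clockwise order, the arc $\arc{a,b}$ is a sub-arc of $\arc{p_1,p_{2k}}$, which is strictly contained in a semicircle; hence $\arc{a,b}$ is shorter than a semicircle. In the same way $b\in A_2$ and $c\in A_3$ both lie on $\arc{p_{k+1},p_{3k}}$ with $b$ preceding $c$, so $\arc{b,c}$ is a sub-arc of $\arc{p_{k+1},p_{3k}}$ and is also shorter than a semicircle. Consequently neither $\arc{a,b}$ nor $\arc{b,c}$ can be the long arc, so the only vertex that can serve as a middle point is $b\in A_2$; if no arc is long then the part is bounding and contributes no middle point at all. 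Ranging over all parts shows that the middle points of $\calP$ lie in $A_2$, as required.

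I do not anticipate a serious obstacle. Once Proposition \ref{matching} delivers the partition, everything reduces to a three-way case analysis of the arcs, and the two semicircle hypotheses are cut out precisely to kill the two undesirable cases. The only point that will demand a little care is the bookkeeping that $\arc{a,b}$ and $\arc{b,c}$ are genuinely sub-arcs of $\arc{p_1,p_{2k}}$ and $\arc{p_{k+1},p_{3k}}$, which follows directly from the fact that $A_1,A_2,A_3$ appear as consecutive blocks in the clockwise ordering.
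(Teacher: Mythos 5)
Your proposal is correct and follows essentially the same route as the paper: invoke Proposition \ref{matching} on $\{A_1,A_2,A_3\}$ and $\{C_1,C_2,C_3\}$, then use the fact that $\arc{a,b}\subseteq\arc{p_1,p_{2k}}$ and $\arc{b,c}\subseteq\arc{p_{k+1},p_{3k}}$ are each shorter than a semicircle to rule out $a$ and $c$ as middle points. Your "long arc" case analysis is just a slight rephrasing of the paper's direct appeal to the definition of middle point.
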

	\begin{proof}
		By using Proposition \ref{matching} on two partitions $\{A_1, A_2, A_3\}$ and $\{C_1, C_2, C_3\}$, we can find a colorful partition into 3-subsets such that each part uses exactly one from each $A_j$.
		
		Suppose that one of the parts $T$ is unbounding. Let us denote the unique point in $T\cap A_j$ by $x_j$. The point $x_3$ cannot be a middle point of $T$, since $\arc{x_1, x_2}$ does not contain $x_3$ but it is contained in $\arc{p_1, p_{2k}}$ which implies that $\arc{x_1, x_2}$ is the shorter arc connecting $x_1$ and $x_2$. Similarly $x_1$ cannot be a middle point of $T$, so $x_2$ is a middle point of $T$.
	\end{proof}
	
	\section{Proof of Theorem \ref{thm1}}\label{section-proof}
	In this section, we prove Theorem \ref{thm1}.
	Throughout this section, let $S$ be the unit circle centered at the origin $O$, and $P$ be a set of $3k$ points on $S$ without antipodal pairs of points.
	Assume that $P$ is partitioned into 3 color classes $C_1$, $C_2$ and $C_3$ of size $k$.
	
	\medskip
	
	Recall that we need to find a colorful partition of $P$ whose middle points are consecutive.
	If there exists a colorful partition of $P$ which contains at most one colorful unbounding $3$-set, then we are done.
	So we assume that any colorful partition of $P$ contains at least two colorful unbounding $3$-sets.
	
	If the middle points of a colorful partition $\calP$ of $P$ are contained in a semicircle, then the shortest closed arc containing all the middle points of $\calP$ is well-defined.
	We denote this arc by $\gamma(\calP)$.
	In particular, the boundary points of $\gamma(\calP)$ are middle points of two distinct unbounding $3$-sets in $\calP$.
	Then the rest of proof of Theorem \ref{thm1} can be drawn by the following lemma.
	
	\begin{lemma}\label{lem3}
		Let $\calP$ be a colorful partition of $P$ which satisfies the following conditions:
		\begin{enumerate}[label=(\roman*)]
			\item\label{lem3-cond1}\label{minimalitycalP}
			$\calP$ contains the minimum number of colorful unbounding $3$-sets,
			
			\item\label{lem3-cond2}\label{semicircled}
			The middle points of $\calP$ are contained in a semicircle.
			
			\item\label{lem3-cond3}\label{consecutiveness}
			If a point in $P$ is contained in $\gamma(\calP)$, then it is either a middle point of $\calP$ or a point from a colorful bounding $3$-set in $\calP$.
		\end{enumerate}
		If $\gamma(\calP)$ contains a point from a colorful bounding $3$-set in $\calP$, then there exists another colorful partition $\calP'$ of $P$ satisfying \ref{lem3-cond1}, \ref{lem3-cond2} and \ref{lem3-cond3}, such that $\gamma(\calP')$ is strictly contained in $\gamma(\calP)$.
	\end{lemma}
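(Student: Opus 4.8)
The plan is to prove Lemma~\ref{lem3} by a single \emph{local exchange}: I modify $\calP$ on only two of its parts and show that one admissible recoloring drags the extreme middle point of $\gamma(\calP)$ strictly into its interior. Fix an orientation and let $a,c$ be the endpoints of $\gamma(\calP)$, listed so that $\gamma(\calP)=\arc{a,c}$; let $U_a,U_c$ be the (distinct) unbounding parts whose middle points are $a$ and $c$, and let $B\ni b$ be a colorful bounding part with $b$ in the interior of $\gamma(\calP)$. Since the endpoints are middle points and a bounding $3$-set has no middle point, $b$ indeed lies strictly between $a$ and $c$. I would push the endpoint on whose side $b$ lies, say $a$, and record the crucial consequence of condition~\ref{lem3-cond3}: an outer point of the unbounding part $U_a$ is neither a middle point of $\calP$ nor a point of a bounding part, so it cannot lie in $\gamma(\calP)$; hence both outer points of $U_a$ lie \emph{outside} $\gamma(\calP)$, one on each side of $a$.

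The exchange itself uses the fact (underlying Proposition~\ref{matching}) that the six points of $U_a$ and $B$ admit exactly three nontrivial colorful re-pairings, each obtained by swapping the two points of a single color between $U_a$ and $B$. Writing $U_a=\{y,a,y'\}$ with $y$ the outer point on the side of $a$ \emph{away} from $b$, I would swap the color of $y$, bringing the matching point $w\in B$ into $U_a$ and sending $y$ into $B$. When $w$ is the interior point $b$, the new part $\{a,y',b\}$ is unbounding with middle point $b$, while $\{y,u,v\}$ stays bounding; here the semicircular bound on $\gamma(\calP)$ from~\ref{lem3-cond2} guarantees that the arc $\arc{a,y'}$ through $b$ is the short one, so $b$ rather than $a$ is the new middle. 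The two verifications are then: (1) \emph{exactly} one of the two new parts is bounding, so the number of unbounding parts is unchanged and~\ref{lem3-cond1} survives; and (2) the new extreme middle point $b$ is strictly interior. Granting these, every untouched part keeps its middle point inside $\gamma(\calP)$, so the middle points of $\calP'$ still lie in a semicircle~\ref{lem3-cond2}; and since the only point whose membership changed while remaining in the strictly smaller arc $\gamma(\calP')$ is $b$, which is now a middle point, condition~\ref{lem3-cond3} is inherited. Thus $\gamma(\calP')\subsetneq\gamma(\calP)$, and iterating the lemma drives $\gamma$ down until it contains only middle points, which is exactly the consecutiveness required by Theorem~\ref{thm1}.

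The main obstacle is to guarantee that a \emph{good} color to swap is always available. The point $w$ forced by the color of $y$ need not be $b$: the positions of $u,v$ are constrained only by $B$ being bounding (they straddle the diameter through $b$), and in one color pattern the swap instead installs the \emph{other} outer point $y'$ of $U_a$ as the new middle, which lies outside $\gamma(\calP)$ on the wrong side and would enlarge the arc. I expect to dispose of this by a finer analysis of $w\in\{b,u,v\}$: when $w$ is the point of $B$ on the near side, the complementary outcome makes $\{a,y',w\}$ bounding while turning $B$ into an unbounding part whose middle is still the interior point $b$, again preserving~\ref{lem3-cond1} and shrinking $\gamma$; only when $w$ is the point on the far side does the naive move fail, and I would then either run the symmetric move at the endpoint $c$ (using the far outer point of $U_c$) or pass to another interior bounding point. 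Ruling out a \emph{simultaneous} deadlock at both endpoints is the delicate step, and the lever is minimality~\ref{lem3-cond1}: any exchange that creates an extra unbounding part is forbidden, which pins down the admissible re-pairings tightly enough to force an improving move to exist.
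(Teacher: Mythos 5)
Your overall wrapper is the same as the paper's (replace a few parts of $\calP$, check that \ref{lem3-cond1}--\ref{lem3-cond3} are inherited and that $\gamma(\calP')$ loses an endpoint), but the core of the lemma is the existence of the improving repartition, and that is exactly what your proposal leaves open. You restrict yourself to a two-part exchange between $B$ and a single extreme unbounding part $U_a$, which admits only three colorful re-pairings (one per color class), and you explicitly concede that the forced partner $w$ may be the ``wrong'' point of $B$, that the symmetric move at $c$ may also fail, and that ``ruling out a simultaneous deadlock at both endpoints is the delicate step.'' That step is the entire content of the lemma, and minimality \ref{lem3-cond1} alone will not rescue a two-part exchange: the paper's proof repartitions all nine points of $Q = B \cup U_1 \cup U_2$ at once, and in its case analysis the new bounding set $B'$ and the new unbounding sets genuinely mix points from $B$, $U_1$ \emph{and} $U_2$ (e.g.\ $B'=\{b_3,m_1,m_2\}$ or $\{b_3,l_2,r_2\}$, with the two remaining unbounding sets each drawing from all three old parts). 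Such configurations are not reachable by a swap that touches only one of $U_1$, $U_2$, so your deadlock is a real possibility rather than a technicality.

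Two further assertions in your sketch also need proof rather than assertion. First, you claim that after the swap ``$\{y,u,v\}$ stays bounding''; nothing forces this, and if both new parts are unbounding the exchange simply produces a partition violating \ref{lem3-cond1} (it does not contradict minimality, it just fails). The paper handles this by first \emph{choosing} a colorful bounding $3$-set $B'$ of a prescribed combinatorial type inside $Q$ (a case analysis over the position of $B\cap\gamma(\calP)$ and the colors of $l_i, m_i, r_i$, using \ref{minimality}-type consequences of \ref{lem3-cond1} to rule out bad colorings), and only then partitions the remaining six points via Hall's theorem (Proposition \ref{matching} / Observation \ref{obs2}), which guarantees both new parts are unbounding with middle points in $\{p_3,p_4\}$. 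Second, your condition-(iii) bookkeeping assumes the only relevant changed point is $b$, but the displaced points $y$, $u$, $v$ change which part they belong to, and one must check none of them lands in $\gamma(\calP')$ as a non-middle, non-bounding-part point. To make your route work you would have to either prove that a good single-color swap with one of $U_a$, $U_c$ always exists (which I do not believe is true in general), or enlarge the exchange to the full nine-point set as the paper does.
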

	
	\begin{proof}[Proof of Theorem \ref{thm1}]
		First, we show that there exists at least one colorful partition of $P$ satisfying \ref{lem3-cond1}, \ref{lem3-cond2} and \ref{lem3-cond3}.
		Let $\calP$ be a colorful partition of $P$ which satisfies \ref{lem3-cond1}.
		Let $\calP_U$ be the subfamily of $\calP$ which consists of all unbounding $3$-sets in $\calP$, and let $X$ be the union of all $3$-sets in $\calP_U$.
		In particular, $\calP_U \subseteq \calP$ and $X \subseteq P$. Note that there is no colorful bounding $3$-subset of $X$ by \ref{lem3-cond1}. So we can use Observation \ref{lem2} to obtain a colorful partition $\calP'_U$ of $X$ with consecutive middle points.
		In particular, the middle points of $\calP'_U$ are contained in a semicircle.
		If we replace $\calP_U$ with $\calP'_U$ in $\PP$, then we obtain a new colorful partition $\calP'$ of the whole set $P$, which satisfies \ref{lem3-cond1}, \ref{lem3-cond2} and \ref{lem3-cond3}.
		
		Now let $\calF$ be the collection of all colorful partitions of $P$ satisfying \ref{lem3-cond1}, \ref{lem3-cond2} and \ref{lem3-cond3}.
		Choose $\calP_0 \in \calF$ so that $|\gamma(\calP_0) \cap P|$ is minimum possible among all partitions in $\calF$, i.e. $|\gamma(\calP_0) \cap P|$ $=$ $\min\{|\gamma(\calP) \cap P| : \calP \in \calF\}$.
		We claim that the middle points of $\calP_0$ are consecutive. Suppose otherwise.
		Then there is a point $p \in P$ contained in $\gamma(\calP_0)$ which is not a middle point of $\calP_0$.
		Thus $p$ is a point from a colorful bounding $3$-set in $\calP_0$ by \ref{lem3-cond3}.
		By Lemma \ref{lem3}, there exists another partition $\calP_1 \in \calF$ with $\gamma(\calP_1) \subsetneq \gamma(\calP_0)$.
		Since $\gamma(\calP_1)$ misses at least one boundary point of $\gamma(\calP_0)$, which is a point in $P$, we have $|\gamma(\calP_1) \cap P| < |\gamma(\calP_0) \cap P|$.
		This contradicts the choice of $\calP_0$.
	\end{proof}
	
	Let $\calP$ be a colorful partition of $P$ which satisfies \ref{lem3-cond1}, \ref{lem3-cond2} and \ref{lem3-cond3}, and $\gamma(\calP)$ contains a point from a colorful bounding $3$-set in $\calP$.
	In what follows, we choose certain three members in $\calP$ and repartition their union ($9$-point set, three points for each color) into another appropriate $3$-sets, so that if we replace the old ones with the new ones, then we get a desired partition $\calP'$ in Lemma \ref{lem3}. More precisely, let $B$ be a bounding $3$-set in $\calP$ which has a point contained in $\gamma(\calP)$.
	Let $U_1$ and $U_2$ be the unbounding $3$-sets in $\calP$ whose middle points are the boundary points of $\gamma(\calP)$.
	And let $Q = B \cup U_1 \cup U_2$. Under these assumptions, the following claim implies Lemma \ref{lem3}.

	\begin{claim*}\label{lem4}
		There exists a new colorful partition of $Q$ into one bounding and two unbounding $3$-sets, whose set of all middle points is contained in $\gamma(\calP)$ and contains all points in $B \cap \gamma(\calP)$.
	\end{claim*}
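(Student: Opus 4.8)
The plan is to read the exact positions of the nine points of $Q$ off the hypotheses and then perform an explicit surgery, deferring the colors to the very end. First I would localize $Q$ using condition \ref{lem3-cond3}. The points $m_1,m_2$ are the two boundary points of $\gamma(\calP)$, and each of the four \emph{non}-middle points of $U_1,U_2$ is neither a middle point of $\calP$ nor a point of a colorful bounding $3$-set, so condition \ref{lem3-cond3} forces all four to lie strictly outside $\gamma(\calP)$. Since $m_1$ (resp. $m_2$) is the middle point of $U_1$ (resp. $U_2$), its two companions sit on opposite sides of it along a short arc; as $m_1,m_2$ are the \emph{ends} of $\gamma(\calP)$, this means each of $U_1,U_2$ must \emph{straddle} $\gamma(\calP)$, with one companion just outside each end. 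Hence $Q\cap\gamma(\calP)=\{m_1,m_2\}\cup(B\cap\gamma(\calP))$, the remaining points of $Q$ split into a ``left'' group and a ``right'' group sitting just past the two ends of $\gamma(\calP)$, and because $B$ is bounding (so not contained in a semicircle) while $\gamma(\calP)$ is, we have $|B\cap\gamma(\calP)|\in\{1,2\}$; the far points of $B$ then lie on opposite sides.

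With the positions in hand I would build $\calP'$ by rerouting every point of $B\cap\gamma(\calP)$ into the \emph{middle} of a new unbounding triple, flanking it by one left and one right point so that it becomes the central point, and collecting the genuinely spread points into one bounding triple. When $B\cap\gamma(\calP)=\{b\}$, the natural choice puts the two far points $b_2,b_3$ of $B$ together with one of $m_1,m_2$ into the bounding triple (this triangle still contains the center, just as $B$ did), forms $\{L,b,R\}$ with middle $b$ from one left and one right flank, and assigns the leftover flanks together with the other of $m_1,m_2$ to a third triple with middle in $\gamma(\calP)$. When $B\cap\gamma(\calP)=\{b,b'\}$, I would instead flank $b$ and $b'$ separately (each by one end of $\{m_1,m_2\}$ and one outside flank) to get two unbounding triples with middles $b,b'$, and collect the far point $b''$ of $B$ with the two opposite flanks into the bounding triple. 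In every case the middle points of the new triples are the points of $B\cap\gamma(\calP)$ plus at most one of $m_1,m_2$, all lying in $\gamma(\calP)$.

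Two bookkeeping facts finish the reduction to Lemma~\ref{lem3}. Each new triple has one point to the left of $\gamma(\calP)$, one inside it, and one to the right, so any such triple that is unbounding has its central point inside $\gamma(\calP)$; this is exactly the mechanism of Observation~\ref{obs2}, and it keeps the middle set of $\calP'$ inside $\gamma(\calP)$. For the count of unbounding triples I would only need a single positional check, namely that the designated spread triple is \emph{bounding}: then $Q$ has at most two unbounding triples, while minimality \ref{lem3-cond1} of $\calP$ forbids any colorful repartition of $Q$ from having fewer than two (otherwise substituting it into $\calP$ would beat the global minimum). Hence $\calP'$ has exactly the minimum number, the two flanking triples are automatically unbounding, and every point of $B\cap\gamma(\calP)$ is realized as a middle point; conditions \ref{lem3-cond2} and \ref{lem3-cond3} for $\calP'$ then follow from $\gamma(\calP')\subseteq\gamma(\calP)$.

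The hard part will be the colors. The groupings above are dictated purely by position, but each new triple must be colorful, and the colors of $m_1,m_2$ relative to the points of $B$ need not cooperate with the obvious grouping; for instance $\{b_2,m_i,b_3\}$ is colorful only if some $m_i$ shares the color of $b$, which can fail. I would therefore invoke Proposition~\ref{matching} on $Q$, with the three positional groups playing the role of $A_1,A_2,A_3$, to produce a colorful group-respecting partition for free. The real content is then to use the freedom in the choice of representatives, together with a short case analysis of the color pattern of $\{m_1,m_2\}\cup(B\cap\gamma(\calP))$, to guarantee that the resulting bounding triple does \emph{not} take a point of $B\cap\gamma(\calP)$ as its inside point, so that each such point is pushed out to be a genuine middle. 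Reconciling this color constraint with the forced geometric type of each triple, and checking that the designated spread triple really is bounding, is where essentially all of the casework resides.
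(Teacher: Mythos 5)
Your setup is sound and matches the paper's in outline: you correctly locate the nine points of $Q$ (the companions $l_1,l_2,r_1,r_2$ straddle $\gamma(\calP)$, $Q\cap\gamma(\calP)=\{m_1,m_2\}\cup(B\cap\gamma(\calP))$, and $|B\cap\gamma(\calP)|\in\{1,2\}$), and the intended surgery --- first extract one colorful bounding triple, then let Observation \ref{obs2} force the points of $B\cap\gamma(\calP)$ to be the new middle points --- is exactly the paper's strategy. The problem is that you stop precisely where the proof begins. The entire content of the claim is the \emph{existence} of a colorful bounding $3$-set $B'$ of the right positional type, and you explicitly defer this to an unspecified ``short case analysis.'' It is not short, and it is not routine: the paper devotes most of a page to it, splitting into the cases $|B\cap\gamma(\calP)|=2$ and $=1$, and in the latter case into sub-cases according to whether $b_2$ or $b_3$ falls in $\arc{-m_1,-m_2}$, each resolved by a chain of color deductions that repeatedly plays candidate bounding triples against one another and against the minimality condition \ref{minimality}. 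Your own example ($\{b_2,m_i,b_3\}$ need not be colorful) already shows the naive candidates fail; a proof must exhibit a complete list of fallback candidates and verify that at least one is always colorful \emph{and} bounding \emph{and} leaves the remaining six points in a configuration satisfying the semicircle hypothesis of Observation \ref{obs2}. That last constraint is why the paper's types (1) and (2) carry the extra clauses forcing $B'$ to absorb $b_2$ when $b_2\in\arc{-m_1,-b_1}$ (resp.\ $b_3$ when $b_3\in\arc{-b_1,-m_2}$): without them the arc $\arc{p_1,p_{2k}}$ of the leftover six points contains an antipodal pair and Observation \ref{obs2} does not apply.

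A secondary gap: your proposal to run Proposition \ref{matching} on all nine points of $Q$ with ``three positional groups'' does not typecheck in the case $|B\cap\gamma(\calP)|=2$, where the natural groups have sizes $2$, $4$, $2$ with $b_3$ left over on the far side; Proposition \ref{matching} needs three groups of equal size. Even in the case $|B\cap\gamma(\calP)|=1$, where the groups $\{l_1,l_2,b_3\}$, $\{m_1,b_1,m_2\}$, $\{r_1,r_2,b_2\}$ do have size $3$, a group-respecting colorful partition produced by Proposition \ref{matching} can return $B$ itself as one of its triples (one point from each group), making $b_1$ the non-middle vertex of a bounding triple and defeating the purpose. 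So the matching proposition cannot be invoked ``for free'' on $Q$; it must be applied, as in the paper, only \emph{after} a carefully chosen $B'$ has been removed. As it stands the proposal is a correct plan with the decisive step missing.
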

	\begin{figure}[ht!]
		\begin{center}
			\begin{subfigure}{0.45\textwidth}
				\centering
				\begin{tikzpicture}[scale=0.9]
				\draw[thick]  (0,0) circle [radius = 2];
				\draw[fill] (0,0) circle [radius = 0.05];
				
				\draw[thick,dotted] (-1.1,1.7)--(-1.8,0.8)--(1.95,-0.5)--(-1.1,1.7);
				\draw[thick,dotted] (1.1,1.7)--(-2,0)--(1.9,0.7)--(1.1,1.7);
				\draw[thick,dotted] (-0.5,1.93)--(0.5,1.93)--(0,-2)--(-0.5,1.93);
				
				\draw[thick] (0,-2)--(-2,0)--(1.1,1.7)--(0,-2);
				
				\draw [red,fill,shift={(-0.5,1.93)}] (0,0.13) -- (0.13,-0.1) -- (-0.13,-0.1) -- (0,0.13);
				\node[above] at (-0.5,1.93) {$b_1$};
				\draw[blue,fill,shift={(0.5,1.93)}] (-0.11,-0.11) rectangle (0.11,0.11);
				\node[above] at (0.5,1.93) {$b_2$};
				\draw[fill,green] (0,-2) circle [radius = 0.13];
				\node[below] at (0,-2) {$b_3$};
				
				\node[right] at (1.1,-1.7) {$-m_1$};
				\draw (1.1,-1.67) circle [radius = 0.05];
				\node[left] at (-1.1,-1.7) {$-m_2$};
				\draw (-1.1,-1.67) circle [radius = 0.05];
				
				\node[right] at (1.1,1.7) {$m_2$};
				%\draw[fill] (1.1,1.7) circle [radius = 0.05];
				\draw [red,fill,shift={(1.1,1.7)}] (0,0.13) -- (0.13,-0.1) -- (-0.13,-0.1) -- (0,0.13);
				\node[left] at (-1.1,1.7) {$m_1$};
				%\draw[fill] (-1.1,1.7) circle [radius = 0.05];
				\draw[fill,green] (-1.1,1.7) circle [radius = 0.13];
				
				\node[left] at (-1.8,0.8) {$l_1$};
				%\draw[fill] (-1.8,0.8) circle [radius = 0.05];
				\draw [red,fill,shift={(-1.8,0.8)}] (0,0.13) -- (0.13,-0.1) -- (-0.13,-0.1) -- (0,0.13);
				\node[left] at (-2,0) {$l_2$};
				%\draw[fill] (-2,0) circle [radius = 0.05];
				\draw[blue,fill,shift={(-2,0)}] (-0.11,-0.11) rectangle (0.11,0.11);
				
				\node[right] at (1.95,-0.5) {$r_1$};
				%\draw[fill] (1.95,-0.5) circle [radius = 0.05];
				\draw[blue,fill,shift={(1.95,-0.5)}] (-0.11,-0.11) rectangle (0.11,0.11);
				\node[right] at (1.9,0.7) {$r_2$};
				%\draw[fill] (1.9,0.7) circle [radius = 0.05];
				\draw[fill,green] (1.9,0.7) circle [radius = 0.13];
				\end{tikzpicture}
				\subcaption{An example of $B'$.}
				\label{fig-claim-1a}
			\end{subfigure}
			~
			\begin{subfigure}{0.45\textwidth}
				\centering
				\begin{tikzpicture}[scale=0.9]
				\draw[thick]  (0,0) circle [radius = 2];
				\draw[fill] (0,0) circle [radius = 0.05];
				
				%\draw[thick,dotted] (-1.1,1.7)--(-1.8,0.8)--(1.95,-0.5)--(-1.1,1.7);
				%\draw[thick,dotted] (1.1,1.7)--(-2,0)--(1.9,0.7)--(1.1,1.7);
				%\draw[thick,dotted] (-0.5,1.93)--(0.5,1.93)--(0,-2)--(-0.5,1.93);
				
				\draw[thin,loosely dashed] (0,-2)--(-2,0)--(1.1,1.7)--(0,-2);
				
				\draw[thick] (-1.8,0.8)--(0.5,1.93)--(1.9,0.7)--(-1.8,0.8);
				\draw[thick] (-1.1,1.62)--(-0.5,1.9)--(1.95,-0.5)--(-1.1,1.7);
				
				\draw [red,fill,shift={(-0.5,1.93)}] (0,0.13) -- (0.13,-0.1) -- (-0.13,-0.1) -- (0,0.13);
				\node[above] at (-0.5,1.93) {$b_1$};
				\draw[blue,fill,shift={(0.5,1.93)}] (-0.11,-0.11) rectangle (0.11,0.11);
				\node[above] at (0.5,1.93) {$b_2$};
				%\draw[fill,green] (0,-2) circle [radius = 0.13];
				\draw (0,-2) circle [radius = 0.05];
				\node[below] at (0,-2) {$b_3$};
				
				\node[right] at (1.1,-1.7) {$-m_1$};
				\draw (1.1,-1.67) circle [radius = 0.05];
				\node[left] at (-1.1,-1.7) {$-m_2$};
				\draw (-1.1,-1.67) circle [radius = 0.05];
				
				\node[right] at (1.1,1.7) {$m_2$};
				%\draw[fill] (1.1,1.7) circle [radius = 0.05];
				%\draw [red,fill,shift={(1.1,1.7)}] (0,0.13) -- (0.13,-0.1) -- (-0.13,-0.1) -- (0,0.13);
				\draw(1.1,1.67) circle [radius=0.05];
				\node[left] at (-1.1,1.7) {$m_1$};
				%\draw[fill] (-1.1,1.7) circle [radius = 0.05];
				\draw[fill,green] (-1.1,1.7) circle [radius = 0.13];
				
				\node[left] at (-1.8,0.8) {$l_1$};
				%\draw[fill] (-1.8,0.8) circle [radius = 0.05];
				\draw [red,fill,shift={(-1.8,0.8)}] (0,0.13) -- (0.13,-0.1) -- (-0.13,-0.1) -- (0,0.13);
				\node[left] at (-2,0) {$l_2$};
				%\draw[fill] (-2,0) circle [radius = 0.05];
				%\draw[blue,fill,shift={(-2,0)}] (-0.11,-0.11) rectangle (0.11,0.11);
				\draw(-2,0) circle [radius=0.05];
				
				\node[right] at (1.95,-0.5) {$r_1$};
				%\draw[fill] (1.95,-0.5) circle [radius = 0.05];
				\draw[blue,fill,shift={(1.95,-0.5)}] (-0.11,-0.11) rectangle (0.11,0.11);
				\node[right] at (1.9,0.7) {$r_2$};
				%\draw[fill] (1.9,0.7) circle [radius = 0.05];
				\draw[fill,green] (1.9,0.7) circle [radius = 0.13];
				\end{tikzpicture}
				\subcaption{Partition via Observation \ref{obs2}.}
				\label{fig-claim-1b}
			\end{subfigure}
		\end{center}
		\caption{A possible case when $|\gamma \cap B|=2$.
			In (a), the thick triangle represents $B'$.
			In (b), two thick triangles represent the partition given by Observation \ref{obs2}. 
			Here, $b_1$ and $b_2$ become the new middle points.}
		\label{fig-claim-1}
	\end{figure}
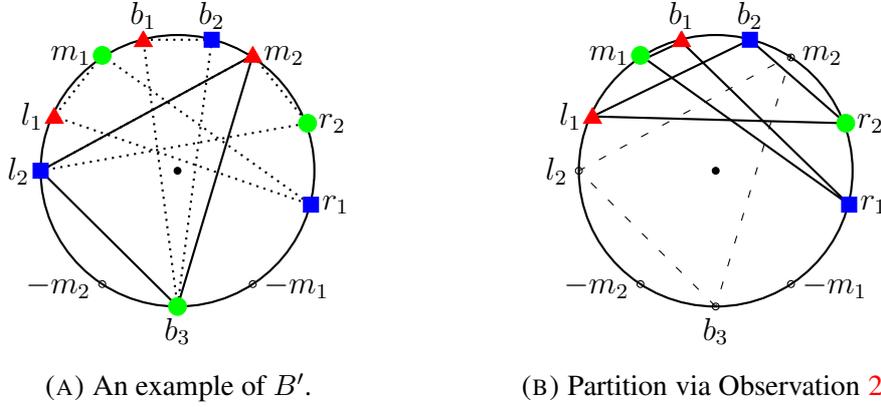
	~
	\begin{figure}[ht!]
		\begin{center}
			\begin{subfigure}[{caption for a}]{0.45\textwidth}
				\centering
				\begin{tikzpicture}[scale=0.9]
				\draw[thick]  (0,0) circle [radius = 2];
				\draw[fill] (0,0) circle [radius = 0.05];
				
				\draw[thick,dotted] (0,2)--(0.55,-2)--(-1.73,-1.1)--(0,2);
				\draw[thick,dotted] (-1.1,1.7)--(-1.8,0.8)--(1.95,-0.5)--(-1.1,1.7);
				\draw[thick,dotted] (1.1,1.7)--(-2,0)--(1.9,0.7)--(1.1,1.7);
				
				\draw[thick] (0.55,-2)--(-1.8,0.8)--(1.95,-0.5)--(0.55,-2);
				
				\draw [red,fill,shift={(0,2)}] (0,0.13) -- (0.13,-0.1) -- (-0.13,-0.1) -- (0,0.13);
				\node[above] at (0,2) {$b_1$};
				\draw[blue,fill,shift={(0.5,-1.93)}] (-0.11,-0.11) rectangle (0.11,0.11);
				\node[right] at (0.55,-2.2) {$b_2$};
				\draw[green,fill] (-1.7,-1.1) circle [radius = 0.13];
				\node[left] at (-1.73,-1.1) {$b_3$};
				\draw (0,-2) circle [radius = 0.05];
				\node[below] at (0,-2) {$-b_1$};
				
				\node[left] at (-1.1,1.7) {$m_1$};
				%\draw[fill] (-1.1,1.7) circle [radius = 0.05];	
				\draw[blue,fill,shift={(-1.1,1.7)}] (-0.11,-0.11) rectangle (0.11,0.11);
				\node[right] at (1.1,1.7) {$m_2$};
				%\draw[fill] (1.1,1.7) circle [radius = 0.05];
				\draw[blue,fill,shift={(1.1,1.7)}] (-0.11,-0.11) rectangle (0.11,0.11);
				
				\node[right] at (1.1,-1.7) {$-m_1$};
				\draw (1.1,-1.67) circle [radius = 0.05];
				\node[left] at (-1.1,-1.7) {$-m_2$};
				\draw (-1.1,-1.67) circle [radius = 0.05];
				
				\node[left] at (-1.8,0.8) {$l_1$};
				%\draw[fill] (-1.8,0.8) circle [radius = 0.05];
				\draw [red,fill,shift={(-1.8,0.8)}] (0,0.13) -- (0.13,-0.1) -- (-0.13,-0.1) -- (0,0.13);
				\node[left] at (-2,0) {$l_2$};
				%\draw[fill] (-2,0) circle [radius = 0.05];
				\draw[green,fill] (-2,0) circle [radius = 0.13];
				
				\node[right] at (1.95,-0.5) {$r_1$};
				%\draw[fill] (1.95,-0.5) circle [radius = 0.05];
				\draw[green,fill] (1.95,-0.5) circle [radius = 0.13];
				\node[right] at (1.9,0.7) {$r_2$};
				%\draw[fill] (1.9,0.7) circle [radius = 0.05];
				\draw [red,fill,shift={(1.9,0.7)}] (0,0.13) -- (0.13,-0.1) -- (-0.13,-0.1) -- (0,0.13);
				\end{tikzpicture}
				\subcaption{An example of $B'$ (of type (1)).}
				\label{fig-claim-2a}
			\end{subfigure}
			~
			\begin{subfigure}[{caption for a}]{0.45\textwidth}
				\centering
				\begin{tikzpicture}[scale=0.9]
				\draw[thick]  (0,0) circle [radius = 2];
				\draw[fill] (0,0) circle [radius = 0.05];
				
				%\draw[thick,dotted] (0,2)--(0.55,-2)--(-1.73,-1.1)--(0,2);
				%\draw[thick,dotted] (-1.1,1.7)--(-1.8,0.8)--(1.95,-0.5)--(-1.1,1.7);
				%\draw[thick,dotted] (1.1,1.7)--(-2,0)--(1.9,0.7)--(1.1,1.7);
				
				\draw[loosely dashed] (0.55,-2)--(-1.8,0.8)--(1.95,-0.5)--(0.55,-2);
				
				\draw[thick] (-2,0)--(-1.1,1.7)--(1.9,0.7)--(-2,0);				
				\draw[thick] (-1.73,-1.1)--(0,2)--(1.1,1.7)--(-1.73,-1.1);
				
				\draw [red,fill,shift={(0,2)}] (0,0.13) -- (0.13,-0.1) -- (-0.13,-0.1) -- (0,0.13);
				\node[above] at (0,2) {$b_1$};
				%\draw[blue,fill,shift={(0.5,-1.93)}] (-0.11,-0.11) rectangle (0.11,0.11);
				\draw(0.53,-1.95) circle [radius=0.05];
				\node[right] at (0.55,-2.2) {$b_2$};
				\draw[green,fill] (-1.7,-1.1) circle [radius = 0.13];
				\node[left] at (-1.73,-1.1) {$b_3$};
				\draw (0,-2) circle [radius = 0.05];
				\node[below] at (0,-2) {$-b_1$};
				
				\node[left] at (-1.1,1.7) {$m_1$};
				%\draw[fill] (-1.1,1.7) circle [radius = 0.05];	
				\draw[blue,fill,shift={(-1.1,1.7)}] (-0.11,-0.11) rectangle (0.11,0.11);
				\node[right] at (1.1,1.7) {$m_2$};
				%\draw[fill] (1.1,1.7) circle [radius = 0.05];
				\draw[blue,fill,shift={(1.1,1.7)}] (-0.11,-0.11) rectangle (0.11,0.11);
				
				\node[right] at (1.1,-1.7) {$-m_1$};
				\draw (1.1,-1.67) circle [radius = 0.05];
				\node[left] at (-1.1,-1.7) {$-m_2$};
				\draw (-1.1,-1.67) circle [radius = 0.05];
				
				\node[left] at (-1.8,0.8) {$l_1$};
				%\draw[fill] (-1.8,0.8) circle [radius = 0.05];
				\draw(-1.83,0.8) circle [radius=0.05];
				\node[left] at (-2,0) {$l_2$};
				%\draw[fill] (-2,0) circle [radius = 0.05];
				\draw[green,fill] (-2,0) circle [radius = 0.13];
				
				\node[right] at (1.95,-0.5) {$r_1$};
				%\draw[fill] (1.95,-0.5) circle [radius = 0.05];
				\draw(1.94,-0.5) circle [radius=0.05];
				\node[right] at (1.9,0.7) {$r_2$};
				%\draw[fill] (1.9,0.7) circle [radius = 0.05];
				\draw [red,fill,shift={(1.9,0.7)}] (0,0.13) -- (0.13,-0.1) -- (-0.13,-0.1) -- (0,0.13);
				\end{tikzpicture}
				\subcaption{Partition via Observation \ref{obs2}.}
				\label{fig-claim-2b}
			\end{subfigure}
		\end{center}
		\caption{A possible case when $|\gamma \cap B| = 1$.
			In (a), the thick triangle represents $B'$.
			In (b), two thick triangles represent the partition given by Observation \ref{obs2}.
			Here, $m_1$ and $b_1$ become the new middle points.
		}
		\label{fig-claim-2}
	\end{figure}
	
	\begin{proof}[Proof of Lemma \ref{lem3}]
		Let $\{B', U'_1, U'_2\}$ be a partition given by the claim, where $B'$ is a bounding, and $U'_1$ and $U'_2$ are unbounding $3$-sets.
		By replacing $\{B, U_1, U_2\}$ with $\{B',U'_1, U'_2\}$, we obtain from $\calP$ another colorful partition $\calP'$ of the whole set $P$.
		Clearly, $\calP'$ satisfies \ref{lem3-cond1} and \ref{lem3-cond2}. Since the two middle points of $\{B',U'_1,U'_2\}$ are contained in $\gamma(\calP)$, we have $\gamma(\calP') \subseteq \gamma(\calP)$.
		Note that at least one of them is a point in $B \cap \gamma(\calP)$.
		So $\gamma(\calP')$ misses at least one boundary point of $\gamma(\calP)$, which is the middle point of either $U_1$ or $U_2$.
		Thus $\gamma(\calP') \subsetneq \gamma(\calP)$.
		
		Now we show $\calP'$ satisfies \ref{lem3-cond3}.
		Choose an arbitrary point $p \in P\cap \gamma(\PP')$.
		If $p$ is in $(P\setminus Q)\cap \gamma(\PP')$, then it is contained in a $3$-set $T$ with $T \in \calP \cap \calP'$.
		Since $\calP$ satisfies \ref{lem3-cond3} and $p\in \gamma(\PP')\subsetneq\gamma(\PP)$, $T$ must be a bounding $3$-set or an unbounding $3$-set whose middle point is $p$.
		
		Now suppose that $p$ is in $Q \cap \gamma(\calP')$.
		Note that $B \cap \gamma(\calP)$ consists of either one or two points in $B$ since $\gamma(\calP)$ cannot contain $B$ by \ref{lem3-cond2} of $\calP$.
		Also, we have $Q \cap \gamma(\calP)=(B\cap\gamma(\PP)) \cup \{m_1,m_2\}$ where $m_1$ and $m_2$ are two boundary points of $\gamma(\calP)$, by \ref{lem3-cond3} of $\calP$.
		For example, see Figure \ref{fig-claim-1a} and \ref{fig-claim-2a} (in the figures, $B$, $U_1$ and $U_2$ are drawn by dotted triangles.)
		
		If there are two points in $B \cap \gamma(\calP)$, then they are exactly the two middle points of $\{B',U'_1,U'_2\}$.
		So neither $m_1$ nor $m_2$ is contained in $\gamma(\calP')$, that is, $Q \cap \gamma(\calP') = B \cap \gamma(\calP)$.
		Thus $p$ must be a middle point of either $U'_1$ or $U'_2$. 
		
		If there is only one point in $B \cap \gamma(\calP)$, then one of the two middle points of $\{B',U'_1,U'_2\}$ is the point in $B \cap \gamma(\calP)$, and the other one is $m_1$ or $m_2$, say $m_1$.
		Since $\gamma(\calP')$ does not contain the other point $m_2$, the points in $Q \cap \gamma(\calP')$ are exactly the two middle points of $\{B', U'_1, U'_2\}$. 
		Therefore, $\calP'$ satisfies \ref{lem3-cond3}.
	\end{proof}
	
	Before we prove the claim, let us outline the proof.
	First, We choose a suitable colorful bounding $3$-set in $Q$ which becomes $B'$ in the proof of Lemma \ref{lem3}.
	After we choose $B'$, we show that the remaining $6$-point set $Q \setminus B'$ can be partitioned into two colorful unbounding $3$-sets, which becomes $U'_1$ and $U'_2$.
	Here we apply Observation \ref{obs2}.
	See Figure \ref{fig-claim-1} and \ref{fig-claim-2}.
	
	Let us note that the condition \ref{lem3-cond1} of $\calP$ directly implies the following.
	\begin{enumerate}[label=($*$)]
		\item\label{minimality} 
		\emph{Any colorful partition of $Q$ contains at most one colorful bounding $3$-set.}
	\end{enumerate}

	Also, as shown in the proof of Lemma \ref{lem3}, the condition \ref{lem3-cond2} of $\calP$ implies that $B \cap \gamma(\calP)$ consists of either one or two points.
	Recall that $\gamma_S(p,q)$ denotes the arc moving from $p$ to $q$ clockwisely.
	
	\medskip

	Now we start to prove the claim. Denote $\gamma = \gamma(\calP)$.
	Let $B = \{b_1, b_2, b_3\}$ with $b_1 \in \gamma$.
	Let $U_1 = \{l_1, m_1, r_1\}$ and $U_2 = \{l_2, m_2, r_2\}$, where $m_i$ denotes the middle point of $U_i$ for $i \in \{1,2\}$. To fix the relative positions of the points in $U_1$ and $U_2$, we assume that $\gamma$ equals $\arc{m_1,m_2}$ and that the shorter arc connecting $l_i$ and $r_i$, which thus contains $m_i$, equals $\arc{l_i,r_i}$ for $i \in \{1,2\}$.
	Since $U_1$ and $U_2$ are unbounding $3$-sets and $\calP$ satisfies \ref{lem3-cond3}, it follows that $l_1, l_2 \in \arc{-m_2,m_1}$ and $r_1, r_2 \in \arc{m_2,-m_1}$.
	The situation is described in Figure \ref{fig-claim-1} and \ref{fig-claim-2}.
	
	Now we consider the two cases according to the number of points in $B \cap \gamma$.
	For simplicity, we assume that $b_i \in C_i$ for $i \in \{1,2,3\}$.
	
	\medskip
	
	First, consider the case when there are two points in $B \cap \gamma$.
	In this case, we assume $b_2 \in \gamma$ and $b_3 \not\in \gamma$.
	Since $b_1$ and $b_2$ are in $\arc{m_1,m_2}$, $b_3$ must be in $\arc{-m_1,-m_2}$.
	See Figure \ref{fig-claim-1}. Suppose that there exists a colorful bounding $3$-set $B'$ in $Q$ which consists of $b_3$ and exactly one point in each of $\{m_1,l_1,l_2\}$ and $\{m_2,r_1,r_2\}$.
	If we denote the clockwise ordering of $Q \setminus B'$ from $b_3$ by $p_1$, $\ldots$, $p_6$, then $\{p_3,p_4\}$ equals $\{b_1, b_2\}$.
	Note that this ordering satisfies the condition in Observation \ref{obs2}. By \ref{minimality}, $Q \setminus B'$ contains no colorful bounding $3$-set.
	So Observation \ref{obs2} gives a partition of $Q \setminus B'$ into two colorful unbounding $3$-sets $U'_1$ and $U'_2$, whose middle points are exactly $b_1$ and $b_2$.
	Therefore, we have a desired partition $\{B',U'_1,U'_2\}$ of $Q$.
	
	Now we show that such $B'$ always exists.
	Note that $b_3$ forms bounding $3$-sets with the pairs $\{m_1, r_1\}$, $\{l_1,r_1\}$, $\{l_2,m_2\}$, $\{l_2,r_2\}$, $\{m_1,m_2\}$ and $\{m_1,r_2\}$.
	If any one of them is colorful, then we are done.
	So suppose otherwise.
	The first four pairs imply that $r_1, l_2 \in C_3$, so that $m_1, m_2, r_2 \not\in C_3$.
	But then either $\{b_3,m_1,m_2\}$ or $\{b_3,m_1,r_2\}$ is colorful, which is a contradiction.
	This concludes the first case.
	
	\medskip
	
	Second, consider the case when there is only one point in $B \cap \gamma$.
	In this case, we assume $b_2 \in \arc{m_2,-b_1}$ and $b_3 \in \arc{-b_1,m_1}$.
	See Figure \ref{fig-claim-2}. As before, we need to find a suitable colorful bounding $3$-set in $Q$.
	Precisely, we need one of either of the following two types.
	\begin{enumerate}[label=(\arabic*)]
		\item\label{type1}
		A colorful bounding $3$-set which consists of exactly one point from $\{l_1,l_2,b_3\}$ and two points in $\{m_2,r_1,r_2,b_2\}$.
		It should contain $b_2$ if $b_2$ lies in $\arc{-m_1,-b_1}$.
		
		\item\label{type2}
		A colorful bounding $3$-set which consists of exactly one point from $\{r_1,r_2,b_2\}$ and two points in $\{m_1,l_1,l_2,b_3\}$.
		It should contain $b_3$ if $b_3$ lies in $\arc{-b_1,-m_2}$.
	\end{enumerate}
	
	Suppose that there exists a colorful bounding $3$-set $B'$ in $Q$ which is of either type \ref{type1} or type \ref{type2}.
	Denote the clockwise ordering of $Q \setminus B'$ from $-b_1$ by $p_1, \ldots, p_6$.
	If $B'$ is of type \ref{type1}, then $\{p_3,p_4\}$ equals $\{m_1,b_1\}$.
	If $B'$ is of type \ref{type2}, then $\{p_3,p_4\}$ equals $\{b_1,m_2\}$.
	In each case, the ordering satisfies the condition of Observation \ref{obs2}. Combined with \ref{minimality}, Observation \ref{obs2} gives a partition of $Q \setminus B'$ into two colorful unbounding $3$-sets $U'_1$ and $U'_2$, whose middle points are exactly $p_3$ and $p_4$.
	Therefore, we have a desired partition $\{B',U'_1,U'_2\}$.
	
	\smallskip
	
	Finally, we show that such $B'$ of either type \ref{type1} or \ref{type2} always exists. By a way of contradiction, suppose that there is no $B'$ of any type. We divide cases according to the position of $b_2$ and $b_3$.
	
	\begin{itemize}
		\item Suppose that neither $b_2$ nor $b_3$ is in $\arc{-m_1,-m_2}$.
		Note that $\{b_2,b_3,m_2\}$ is a bounding $3$-set.
		If it is colorful, then it is of type \ref{type1}, which is forbidden.
		This implies that $m_2$ should not be in $C_1$, so we have either $r_2 \in C_1$ or $l_2 \in C_1$.
		
		If $r_2 \in C_1$, then $\{b_2,b_3,r_2\}$ is colorful.
		If $\arc{b_3,r_2}$ is contained in a semicircle, then $\{b_2,b_3,r_2\}$ is a bounding $3$-set, so that it is of type \ref{type1}.
		Thus $\arc{r_2,b_3}$ must be contained in a semicircle. Since $-m_2$ and $-l_2$ are contained in $\arc{r_2,b_3}$, $\{b_3,m_2,r_2\}$ and $\{b_3,l_2,r_2\}$ are bounding $3$-sets.
		Note that either one of them must be colorful.
		If $\{b_3,m_2,r_2\}$ is colorful, then it is of type \ref{type1}.
		If $\{b_3,l_2,r_2\}$ is colorful, then it is of type \ref{type2}.
		This leads to a contradiction. The case when $l_2 \in C_1$ can be treated in the same way.
		
		\smallskip
		
		\item Suppose that at least one of $b_2$ and $b_3$ is in $\arc{-m_1,-m_2}$.
		Without loss of generality, we assume that $b_2$ is in $\arc{-m_1,-m_2}$.
		More precisely, $b_2$ is in $\arc{-m_1,-b_1}$. Note that $\{b_2,l_2,m_2\}$ and $\{b_2,l_2,r_2\}$ are bounding $3$-sets.
		If any one of them is colorful, then it is of type \ref{type1}, which is forbidden.
		So both of them must not be colorful.
		This implies $l_2 \in C_2$.
		
		Now suppose that $b_3$ is also in $\arc{-m_1,-m_2}$, so it is in $\arc{-b_1,-m_2}$.
		By a symmetric argument to the previous one, we have $r_1 \in C_3$. Note that $\{b_2,l_1,r_1\}$ is a bounding $3$-set.
		If it is colorful, then it is of type \ref{type1}.
		So $l_1$ must be in $C_2$ or $C_3$.
		Since $l_1$ and $r_1$ are in different colors, we have $l_1 \in C_2$.
		Similarly, if $\{b_3,l_2,r_2\}$, being a bounding $3$-set, is colorful, then it is of type \ref{type2}.
		So $r_2$ must be in $C_2$ or $C_3$.
		Since $l_2 \in C_2$, we have $r_2 \in C_3$.			
		Consequently, $m_1$ and $m_2$ are in $C_1$.
		But then we have two disjoint colorful bounding $3$-sets $\{b_2,m_1,r_1\}$ and $\{b_3,l_2,m_2\}$, which contradicts \ref{minimality}.
		
		Now suppose that $b_3$ is not in $\arc{-b_1,-m_2}$.
		Then $b_3$ is in $\arc{-m_2, m_1}$.
		Note that $\{b_2,b_3,m_2\}$ is a bounding $3$-set.
		If it is colorful, then it is of type \ref{type1}.
		So $m_2$ must be in $C_2$ or $C_3$.
		Since $l_2 \in C_2$, we have $m_2 \in C_3$, so $r_2 \in C_1$.
		With this coloring, both $\{b_2,b_3,r_2\}$ and $\{l_2,r_2,b_3\}$ are colorful. Note that those two 3-sets have common points $r_2$ and $b_3$. If $-r_2$ is in $\arc{-m_2,b_3}$, then $\{b_2,b_3,r_2\}$ is a bounding $3$-set, so it is of type \ref{type1}.
		So $-r_2$ must be in $\arc{b_3, m_1}$.
		But then $\{b_3,l_2,r_2\}$ is a bounding $3$-set, so it is of type \ref{type2}.
		This contradiction concludes the proof.
	\end{itemize}
	
	\section{Final remarks}\label{section-final}
	In this section, we discuss whether it is possible to generalize Theorem \ref{thm-line} in two different aspects. In Section \ref{section-3d}, we show that there is a set of colored planes in convex position in $\R^3$ such that in every colorful partition of the set, simplices determined by parts of the partition have an empty intersection. Hence a direct generalization  of Theorem \ref{thm-line} for hyperplanes in convex position in $\R^d$ seems impossible. And in Section \ref{section-nece}, we see an example of lines in the plane showing that being in convex position is not a necessary condition for the conclusion of Theorem \ref{thm-line}.
	
	\subsection{An example in 3-dimension}\label{section-3d}
	Similar with lines in the plane, we say that hyperplanes $h_1, \dots, h_n$ in general position in $\R^d$ are \textit{in convex position} if there is a connected component of the complement $\R^d \setminus (\bigcup_{i=1}^n h_i)$ whose boundary meets every hyperplane. Especially, planes in general position in $\R^3$ and tangent to an open hemisphere are in convex position, since the connected component containing the origin has a boundary which meets every plane at its tangent point.
	
	\medskip
	We choose the following 8 points
	\begin{table}[ht]
		\centering
		\begin{tabular}{lll}
			$p_1 = (1/3, 2/3, 2/3)$ & $p_2 = (7/9, -4/9, 4/9)$ & $p_3 = (6/7, 2/7, 3/7)$\\
			$p_4 = (17/19, 6/19, 6/19)$&
			$p_5 = (1/3, -2/3, 2/3)$&
			$p_6 = (2/3, -2/3, 1/3)$\\
			$p_7 = (6/7, 3/7, 2/7)$&
			$p_8 = (-2/7, 3/7, 6/7)$
		\end{tabular}
	\end{table}\\
	from the open hemisphere $S=\{(x,y,z)\in\R^3\,:\, x^2+y^2+z^2=1\textrm{ and } z>0\}$. For $1\leq i \leq 8$, let $h_i$ be a plane tangent to $S$ at $p_i$, and define $\mathcal{H}=\{h_i\,:\,1\leq i\leq 8\}$. It is straightforward but tedious to check that $\mathcal{H}$ is in general position. Make a partition of $\mathcal{H}$ into color classes
	\begin{align*}
	C_1 = \{h_1, h_5\},\,
	C_2 = \{h_2, h_6\},\,
	C_3 = \{h_3, h_7\}\, \textrm{ and }C_4 = \{h_4, h_8\}.
	\end{align*}
	
	We claim that for every colorful partition of $\mathcal{H}$, the simplices determined by parts of the partition have an empty intersection. Let $h_i^+$ (or $h_i^-$) be the closed halfspace bounded by $h_i$ and not containing (or, containing, respectively) the origin. For every quadruple $(i_1, i_2,i_3,i_4)$ of distinct elements from $\{1, \dots, 8\}$, and every sign vector $w$ in $\{+,-\}^8$, let $w|_{(i_1, i_2, i_3, i_4)}$ be the closed region $\bigcap_{j=1}^4 h_{i_j}^{w(i_j)}$ where $w(i)$ is the $i$th coordinate of $w$. Let $v(i_1, i_2, i_3, i_4)\in \{+,-\}^8$ be the unique sign vector such that $v(i_1, i_2, i_3, i_4)|_{(i_1, i_2, i_3, i_4)}$ is the bounded simplex determined by planes $h_{i_1}$, $h_{i_2}$, $h_{i_3}$ and $h_{i_4}$, and $v(i)=0$ for every $i \notin \{i_1, \dots, i_4\}$. Note that $-v(i_1, i_2, i_3, i_4)|_{(i_1, i_2, i_3, i_4)}$ is an emptyset.
	%%%%%%%%%%%%%%%%%%%%%%%%%%%%%%%%%%%%%%%%%%%%%%%%%%%%
	% Comment 6: 260: I have spent quite some time thinking what "projection of the sum .. into the $(j_i,j_2,j_3,j_4)$th coordinate is equal to ..." means. I would recommend to rephrase as follows: "the vector $v(i_1,i_2,i_3,i_4) + v(i_5,i_6,i_7,i_8)$ in positions $j_1,j_2,j_3,j_4$ has the same entries as the vector $-v(j_1,j_2,j_3,j_4)$ for a certain choice of positions $j_1,j_2,j_3,j_4$".
	
	% Comment 7: Also, correct "quadraple" on Page 8.
	%%%%%%%%%%%%%%%%%%%%%%%%%%%%%%%%%%%%%%%%%%%%%%%%%%%%
	Let us consider a partition $\{h_{i_1},h_{i_2},h_{i_3},h_{i_4}\}\cup\{h_{i_5},h_{i_6},h_{i_7},h_{i_8}\}$ of $\HH$ . If there is a quadruple $(j_1, j_2,j_3,j_4)$ of distinct elements such that the vector $v(i_1,i_2,i_3,i_4)$ $+$ $v(i_5,i_6,i_7,i_8)$ in positions $j_1,j_2,j_3,j_4$ has the same entries as %the vector
	$-v(j_1,j_2,j_3,j_4)$ for a certain choice of positions $j_1,j_2,j_3,j_4$, then we can see that the intersection of two simplices determined by the subsets must be empty, since the intersection is contained in $-v(j_1, j_2, j_3, j_4)|_{(j_1, j_2, j_3, j_4)}$ which is empty. Therefore, it is sufficient to find such a quadruple for every colorful partition. Which quadruple $(j_1,j_2, j_3, j_4)$ can be chosen for each colorful partition is summarized at Table \ref{table2}, with corresponding sign vectors.
	
	\begin{table}[ht]
		\centering
		\begin{tabular}{|c|c|c|}
			\hline
			$(i_1,i_2,i_3,i_4,i_5,i_6,i_7,i_8)$ & $v(i_1,i_2,i_3,i_4)+v(i_5,i_6,i_7,i_8)$ &
			$(j_1,j_2,j_3,j_4)$ \\
			\hline
			$(1,2,3,4,5,6,7,8)$ &$(-,-,+,-,+,-,+,-)$ &$(1,2,5,7)$\\%$(+, +, 0, 0, -, 0, -, 0)$ \\
			$(1,2,3,8,5,6,7,4)$ &$(+,+,-,+,-,-,-,-)$ &$(1,2,3,4)$\\%$(-, -, +, -, 0, 0, 0, 0)$ \\
			$(1,2,7,4,5,6,3,8)$ &$(+,-,+,+,+,-,-,-)$ &$(2,4,5,8)$\\%$(0, +, 0, -, -, 0, 0, +)$ \\
			$(1,2,7,8,5,6,3,4)$ &$(+,+,+,-,-,+,-,-)$ &$(1,4,5,6)$\\%$(-, 0, 0, +, +, -, 0, 0)$ \\
			$(1,6,3,4,5,2,7,8)$ &$(-,+,+,-,-,-,-,+)$ &$(1,2,5,8)$\\%$(+, -, 0, 0, +, 0, 0, -)$ \\
			$(1,6,3,8,5,2,7,4)$ &$(-,-,+,+,+,-,-,+)$ &$(1,2,3,8)$\\%$(+, +, -, 0, 0, 0, 0, -)$ \\
			$(1,6,7,4,5,2,3,8)$ &$(-,-,+,+,+,-,-,-)$ &$(1,2,4,5)$\\%$(+, +, 0, -, -, 0, 0, 0)$ \\
			$(1,6,7,8,5,2,3,4)$ &$(+,+,+,-,-,+,-,-)$ &$(1,4,5,6)$\\%$(-, 0, 0, +, +, -, 0, 0)$ \\
			\hline
		\end{tabular}
		\caption{For all colorful partitions, the sign vector $-v(j_1,j_2,j_3,j_4)$ guarantees that simplices formed by the parts have empty intersection.}
		\label{table2}
	\end{table}

	\subsection{Being in convex position is not necessary.}
	\label{section-nece}
	\begin{figure}[ht]
		\begin{center}
			\begin{tikzpicture}[scale=0.8]
			\draw[thick] (-5,2.5) -- (5,2.5);
			\draw[thick] (1,5) -- (-4.5,-3);
			\draw[thick] (0,5) -- (4,-3);
			\draw[thick] (3.5,5) -- (1,-3);
			\draw[thick] (5,-1) -- (-5,-2);
			\draw[thick] (5,-3) -- (-4,3);
			
			\node[below] at (-1,-1) {$l_6$};
			\node[below] at (-0.5,0.7) {$l_1$};
			\node[left] at (-1.05,2) {$l_2$};
			\node[above] at (0.4,2.5) {$l_3$};
			\node[right] at (1.65,1.8) {$l_4$};
			\node[right] at (1.8,-0.3) {$l_5$};
			
			\draw[fill=gray!50] (-0.71,2.5)--(1.25,2.5)--(2.15,0.68)--(1.68,-0.78)--(-1.5,1.33);
			\end{tikzpicture}
		\end{center}
		\caption{An example not in convex position, which has a colorful partition with intersecting triangles for every possible coloring.}
		\label{counterplane2}
	\end{figure}
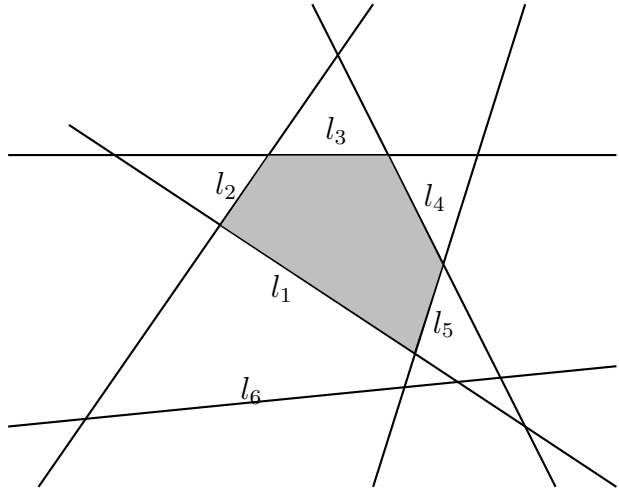

	In this subsection, we show that being in convex position is not necessary for the conclusion of Theorem \ref{thm-line}. More precisely, we give an example of 6 lines which is not in convex position such that for every coloring using 3 color classes of equal size, we can always find a partition into colorful parts which determine intersecting triangles. This suggests finding a broader class of line arrangments where the conclusion of Theorem \ref{thm-line} holds.

	See Figure \ref{counterplane2}. There are 6 lines $l_1, \dots, l_6$ not in convex position, and the 5 lines $l_1, \dots, l_5$ bounds the shaded region. We color each line with colors RED, BLUE and GREEN so that each color class has size 2. Without loss of generality, we assume that $l_6$ is painted by BLUE.

	First, we suppose that $l_2$ and $l_6$ are in different colors. We assume that $l_2$ is colored by RED. If $l_3$ is colored by GREEN, then we can choose a partition $\{l_2, l_3,l_6\}\cup \{l_1,l_4,l_5\}$. If $l_4$ is colored by GREEN, then we can choose a partition $\{l_2,l_4,l_6\}\cup\{l_1,l_3,l_5\}$. So, we only need to consider the case when both $l_1$ and $l_5$ are colored by GREEN. But then, we can take a partition $\{l_2, l_5,l_6\}\cup \{l_1,l_3,l_4\}$.
	
	Next, suppose $l_2$ and $l_6$ are in the same color BLUE.
	If $l_1$ and $l_5$ have different colors, then we can take a partition $\{l_1,l_2,l_5\}\cup\{l_3,l_4,l_6\}$. In this partition, we have a common intersection between triangles formed by 3-sets $\{l_1,l_2,l_5\}$ and $\{l_3,l_4,l_6\}$ in the region bounded by lines $l_3$, $l_4$ and $l_5$. So we can assume that $l_1$ and $l_5$ have the same color. Then, we take a partition into $\{l_1,l_3,l_6\}\cup\{l_2,l_4,l_5\}$. In this case again, we have a common intersection in the region bounded by lines $l_3$, $l_4$ and $l_5$.
	
	\section*{Acknowledgements}
	We are grateful to Andreas Holmsen, Roman Karasev,
	Liping Yuan, Pablo Sober\'{o}n, Minki Kim and Hong Chang Ji for their excellent comments.

\end{document}